\theoremstyle{plain} 
\newtheorem{theorem}{\indent\sc Theorem}[section]
\newtheorem{lemma}[theorem]{\indent\sc Lemma}
\newtheorem{corollary}[theorem]{\indent\sc Corollary}
\newtheorem{proposition}[theorem]{\indent\sc Proposition}
\theoremstyle{definition} 
\newtheorem{definition}[theorem]{\indent\sc Definition}
\newtheorem{remark}[theorem]{\indent\sc Remark}
\title{Harmonic metallic structures}
\author{Adara M. Blaga and Antonella Nannicini}
\date{}
\begin{document}

\maketitle

\markboth{{\small\it {\hspace{4cm} Harmonic metallic structures}}}{\small\it{Harmonic metallic structures
\hspace{4cm}}}

\footnote{ 
2010 \textit{Mathematics Subject Classification}.
53C15, 53C43, 58C99.
}
\footnote{ 
\textit{Key words and phrases}.
metallic pseudo-Riemannian structures; harmonic structures; harmonic maps.
}

\begin{abstract}
The concept of harmonic metallic structure on a metallic pseudo-Riemannian manifold is introduced. In the case of compact manifolds we prove that harmonicity of a metallic structure $J$,  with $J^2=pJ+qI$ and $p^2+4q\neq 0$, is  equivalent to $dJ=0$.
Conditions for a harmonic metallic structure to be preserved by harmonic maps are also given.
Moreover, we consider harmonic metallic structures on the generalized tangent bundle, provide a Weitzenb\"{o}ck formula for the dual metallic structure and express the Hodge-Laplace operator on $TM \oplus T^*M$.
\end{abstract}

\bigskip

\section{Introduction}

Inspired by the paper of W. Jianming \cite{ji}, we introduce the notion of \textit{harmonic metallic structure} and underline the connection between harmonic metallic structures and harmonic maps.  It is well known that harmonic maps play an important role in many areas of mathematics. They often appear in nonlinear theories because of the nonlinear nature of the corresponding partial differential equations. In theoretical physics, harmonic maps are also known as sigma models. Remark also that harmonic maps between manifolds endowed with different geometrical structures have been studied in many contexts: S. Ianu\c s and A. M. Pastore treated the case of contact metric manifolds \cite{pa}, C.-L. Bejan and M. Benyounes the almost para-Hermitian manifolds \cite{bej}, B. Sahin the locally conformal K\"{a}hler manifolds \cite{sa}, S. Ianu\c s, R. Mazzocco and G. E. V\^ ilcu the quaternionic K\"{a}hler manifolds \cite{i}, J. P. Jaiswal the Sasakian manifolds \cite{ja}, D. Fetcu the complex Sasakian manifolds \cite{fe}, J. Li the Finsler manifolds \cite{li} etc. A. Fotiadis studied the noncompact case, describing the problem of finding a harmonic map between noncompact manifolds \cite{fo}.

In the present paper, first, we consider the case of compact Riemannian manifolds and prove that harmonicity of a metallic structure $J$ with $J^2=pJ+qI$ and $p^2+4q\neq 0$, is equivalent to $dJ=0$, then we relate harmonicity to integrability. Conditions for a harmonic metallic structure to be preserved by harmonic maps are also given.
Moreover, we consider harmonic metallic structures on the generalized tangent bundle, provide a Weitzenb\"{o}ck formula for the dual metallic structure and express the Hodge-Laplace operator on $TM \oplus T^*M$.

\section{Harmonic metallic structures}

\subsection{Preliminaries}

Let $(M,g)$ be an $n$-dimensional pseudo-Riemannian manifold. We recall that a metallic pseudo-Riemannian structure $J$ on $M$ is a $g$-symmetric $(1,1)$-tensor field on $M$ such that $J^2=pJ+qI$, for some $p$ and $q$ real numbers, and $(M,J,g)$ is called a \textit{metallic pseudo-Riemannian manifold} (\cite{bn}, \cite{bn1}, \cite{bn2}).

Let $\nabla$ be the Levi-Civita connection associated to $g$. Consider the exterior differential and
codifferential operators defined for any tangent bundle-valued
$p$-form $T\in \Gamma(\Lambda^pT^*M\otimes TM)$ by
$$(dT)(X_1,\dots,X_{p+1}):=-\sum_{i=1}^{p+1}(-1)^{i}(\nabla_{X_i}T)(X_1,\dots,\widehat{X_i},\dots,X_{p+1})$$
and
$$(\delta T)(X_1,\dots,X_{p-1}):=-\sum_{i=1}^{n}(\nabla_{E_i}T)(E_i,X_1,\dots,X_{p-1}),$$
for $\{E_i\}_{1\leq i\leq n}$ a $g$-orthonormal frame field, and the
Hodge-Laplace operator on $\Gamma(\Lambda^pT^*M\otimes TM)$ by
$$
\Delta:=d\circ \delta+\delta \circ d.
$$

W. Jianming studied in \cite{ji} some properties of harmonic complex
structures and we discussed in \cite{ada} the almost tangent case and in \cite{ablaga} the para-cosymplectic case.

We pose the following:

\begin{definition}
A metallic structure $J$ is called \textit{harmonic} if $\Delta J=0$.
\end{definition}

If $M$ is compact and $g$ is positive definite, from the definition it follows that $J$ is harmonic if and only if
$dJ=0$ and $\delta J=0$ which is equivalent to
$(\nabla_XJ)Y=(\nabla_YJ)X$, for any $X$, $Y\in C^{\infty}(TM)$ and
$trace(\nabla J)=0$.

\subsection{Properties of harmonic metallic structures}

Let $(M,J,g)$ be an $n$-dimensional metallic pseudo-Riemannian manifold, let $\nabla$ be the Levi-Civita connection associated to $g$ and let $\{E_i\}_{1\leq i\leq n}$ be a $g$-orthonormal frame field.

We have the followings:

\begin{lemma}  
$$\sum_{i=1}^{n}g((dJ)(X,E_i),E_i)=\sum_{i=1}^{n}g((\nabla_X J)E_i,E_i)+g(X,\delta J),$$
for any $X\in C^{\infty}(TM)$.
\end{lemma}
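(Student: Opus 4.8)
The plan is to expand the left-hand side using the definition of the exterior differential $d$ specialized to a $TM$-valued $1$-form, and then to convert the leftover term into the codifferential $\delta J$ by exploiting the $g$-symmetry of the covariant derivative of $J$. Since $J$ is a $(1,1)$-tensor, i.e. a $TM$-valued $1$-form, I would set $p=1$ in the given formula for $dT$, which gives $(dJ)(X,E_i)=(\nabla_X J)E_i-(\nabla_{E_i}J)X$. Pairing with $E_i$ via $g$ and summing over $i$ then splits the left-hand side into $\sum_i g((\nabla_X J)E_i,E_i)$, which is already the first term on the right, plus the remainder $-\sum_i g((\nabla_{E_i}J)X,E_i)$, which is exactly what must be identified with $g(X,\delta J)$.

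The crux of the argument — the only place where something beyond bookkeeping is needed — is the observation that each endomorphism $\nabla_{E_i}J$ is again $g$-symmetric. I would derive this from the hypothesis that $J$ is $g$-symmetric together with the metric compatibility $\nabla g=0$ of the Levi-Civita connection: differentiating the identity $g(JY,Z)=g(Y,JZ)$ in the direction $E_i$ and cancelling the terms that carry $\nabla_{E_i}$ onto the arguments leaves precisely $g((\nabla_{E_i}J)Y,Z)=g(Y,(\nabla_{E_i}J)Z)$. Granting this, I can move the operator off $X$ in the remainder term, writing $g((\nabla_{E_i}J)X,E_i)=g(X,(\nabla_{E_i}J)E_i)$, so that $-\sum_i g((\nabla_{E_i}J)X,E_i)=g\bigl(X,-\sum_i(\nabla_{E_i}J)E_i\bigr)$.

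Finally I would read off $\sum_i(\nabla_{E_i}J)E_i=-\delta J$ directly from the definition of the codifferential with $p=1$ (so that it retains no further arguments), whence the remainder term equals $g(X,\delta J)$; adding back the first sum yields the asserted identity. I do not expect a serious obstacle here: the computation is short once the $g$-symmetry of $\nabla_{E_i}J$ is in hand, and that symmetry is the single fact doing the real work. The only point requiring mild care is to keep the summation conventions for the $g$-orthonormal frame $\{E_i\}$ consistent on both sides, which is why the two sums pair against the same frame throughout.
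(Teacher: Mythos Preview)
Your proof is correct and follows essentially the same route as the paper: expand $(dJ)(X,E_i)=(\nabla_X J)E_i-(\nabla_{E_i}J)X$ and then use $g$-symmetry to rewrite $g((\nabla_{E_i}J)X,E_i)$ as $g(X,(\nabla_{E_i}J)E_i)$, which assembles into $g(X,\delta J)$. The only cosmetic difference is that the paper carries out this last step by fully expanding $(\nabla_{E_i}J)X=\nabla_{E_i}JX-J(\nabla_{E_i}X)$ and using metric compatibility and the $g$-symmetry of $J$ line by line, whereas you package the same computation as the single observation that $\nabla_{E_i}J$ inherits $g$-symmetry from $J$; your version is slightly cleaner but the content is identical.
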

\begin{proof} We have:
$$\sum_{i=1}^{n}g((dJ)(X,E_i),E_i)=\sum_{i=1}^{n}g((\nabla_X J)E_i-{\nabla}_{E_i}JX+J({\nabla}_{E_i}X),E_i)=$$
$$=\sum_{i=1}^{n}[g((\nabla_X J)E_i,E_i)-E_i(g(JX,E_i))+g(JX,\nabla_{E_i}E_i)+g(\nabla_{E_i}X,JE_i)]=$$
$$=\sum_{i=1}^{n}[g((\nabla_X J)E_i,E_i)-E_i(g(X,JE_i))+g(X,J(\nabla_{E_i}E_i))+g(\nabla_{E_i}X,JE_i)]=$$
$$=\sum_{i=1}^{n}[g((\nabla_X J)E_i,E_i)-g(X,\nabla_{E_i}JE_i)+g(X,J(\nabla_{E_i}E_i))]=$$
$$=\sum_{i=1}^{n}[g((\nabla_X J)E_i,E_i)-g(X,(\nabla_{E_i}J)E_i)]=\sum_{i=1}^{n}g((\nabla_X J)E_i,E_i)+g(X,\delta J).$$
\end{proof}

\begin{corollary} $$dJ=0 \Longrightarrow g(X,\delta J)=-\sum_{i=1}^{n}g((\nabla_X J)E_i,E_i), \ \ \textit{for any} \ \ X\in C^{\infty}(TM).$$
\end{corollary}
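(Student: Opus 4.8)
The plan is to obtain this as an immediate consequence of the preceding Lemma. That Lemma already establishes the identity
$$\sum_{i=1}^{n}g((dJ)(X,E_i),E_i)=\sum_{i=1}^{n}g((\nabla_X J)E_i,E_i)+g(X,\delta J)$$
for every $X\in C^{\infty}(TM)$, so all the genuine computation — the repeated use of the definition of $dJ$, the metric compatibility of $\nabla$, the $g$-symmetry of $J$, and the emergence of $\delta J$ through $\sum_i(\nabla_{E_i}J)E_i$ — has already been carried out there. My only remaining task is to feed the hypothesis into this identity.

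First I would invoke the assumption $dJ=0$. By the definition of the exterior differential operator on tangent bundle-valued forms, this means $(dJ)(X,Y)=0$ for all vector fields $X,Y$, and in particular $(dJ)(X,E_i)=0$ for each frame element $E_i$. Consequently the left-hand side of the Lemma's identity vanishes identically, that is,
$$\sum_{i=1}^{n}g((dJ)(X,E_i),E_i)=0.$$

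Finally I would rearrange: substituting the vanishing left-hand side into the Lemma gives
$$0=\sum_{i=1}^{n}g((\nabla_X J)E_i,E_i)+g(X,\delta J),$$
and isolating $g(X,\delta J)$ yields the claimed formula. Since every step is a direct substitution, there is no real obstacle here; the only point worth verifying is that $dJ=0$ as a bundle-valued $2$-form does force the pointwise vanishing used above, which is immediate from the definition. Because $X$ was arbitrary throughout the Lemma, the conclusion holds for any $X\in C^{\infty}(TM)$, as stated.
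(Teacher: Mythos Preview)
Your proof is correct and follows exactly the approach implicit in the paper: the corollary is stated there without a separate proof, precisely because it is obtained by setting the left-hand side of the preceding Lemma equal to zero and rearranging. There is nothing to add.
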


\begin{lemma}
$$\sum_{i=1}^{n}g((dJ)(X,E_i),JE_i)={1\over 2}p\sum_{i=1}^{n}g((\nabla_X J)E_i,E_i)+pg(X,\delta J)-g(JX,\delta J),$$
for any $X\in C^{\infty}(TM)$.
\end{lemma}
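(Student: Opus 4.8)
The plan is to expand $dJ$ exactly as in the previous lemma, writing $(dJ)(X,E_i)=(\nabla_X J)E_i-(\nabla_{E_i}J)X$, and then to split the quantity $\sum_{i=1}^n g((dJ)(X,E_i),JE_i)$ into the two sums $\sum_i g((\nabla_X J)E_i,JE_i)$ and $-\sum_i g((\nabla_{E_i}J)X,JE_i)$, handling each one separately. The first sum should produce the term $\tfrac12 p\sum_i g((\nabla_X J)E_i,E_i)$, and the second should produce $pg(X,\delta J)-g(JX,\delta J)$.

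Two structural identities carry the whole argument, and I would record both at the start. First, since $J$ is $g$-symmetric and $g$ is parallel, each $\nabla_Z J$ is again $g$-symmetric, i.e. $g((\nabla_Z J)Y,W)=g(Y,(\nabla_Z J)W)$. Second, differentiating $J^2=pJ+qI$ gives the relation $(\nabla_Z J)J+J(\nabla_Z J)=p(\nabla_Z J)$. Everything below is just repeated application of these two facts together with the defining formula $\delta J=-\sum_i(\nabla_{E_i}J)E_i$.

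For the first sum, $g$-symmetry of $J$ rewrites $g((\nabla_X J)E_i,JE_i)=g(J(\nabla_X J)E_i,E_i)$, and the differentiated relation replaces $J(\nabla_X J)$ by $p(\nabla_X J)-(\nabla_X J)J$. Setting $A:=\sum_i g((\nabla_X J)E_i,JE_i)$ and $B:=\sum_i g((\nabla_X J)E_i,E_i)$, the symmetry of $\nabla_X J$ shows that $\sum_i g((\nabla_X J)JE_i,E_i)=A$, so one obtains $A=pB-A$, whence $A=\tfrac12 pB$. This is precisely the first summand claimed.

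For the second sum, I again use symmetry to write $g((\nabla_{E_i}J)X,JE_i)=g(X,(\nabla_{E_i}J)JE_i)$, apply the differentiated relation with $Z=E_i$ to get $(\nabla_{E_i}J)JE_i=p(\nabla_{E_i}J)E_i-J(\nabla_{E_i}J)E_i$, and then recognize both resulting sums through $\delta J=-\sum_i(\nabla_{E_i}J)E_i$; after moving $J$ onto $X$ in the second sum via $g$-symmetry of $J$, the two contributions assemble into $pg(X,\delta J)-g(JX,\delta J)$. Adding the two pieces with the signs coming from the expansion of $dJ$ reproduces the stated identity. The computation is entirely mechanical, so the only real obstacle is bookkeeping: keeping the signs straight and applying the symmetry of $\nabla_Z J$ at the right moments. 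It is worth noting that the hypothesis $p^2+4q\neq0$ plays no role here — this lemma is a purely algebraic identity — and that condition enters only later, when harmonicity is translated into the vanishing of $dJ$.
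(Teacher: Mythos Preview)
Your argument is correct. The two structural identities you isolate --- $g$-symmetry of $\nabla_Z J$ and the Leibniz relation $(\nabla_Z J)J+J(\nabla_Z J)=p(\nabla_Z J)$ obtained by differentiating $J^2=pJ+qI$ --- do all the work, and your computations of the two sums are clean and accurate.

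The paper takes a different, more hands-on route: it never records those two identities explicitly but instead expands $(\nabla_X J)E_i$ and $(\nabla_{E_i}J)X$ into raw covariant derivatives $\nabla_X(JE_i)-J\nabla_X E_i$, etc., pairs everything against $JE_i$, and then repeatedly uses $g(JA,JB)=pg(A,JB)+qg(A,B)$ and the product rule for $g$ to grind through the terms. Your approach is shorter and more transparent because the algebraic structure is front-loaded; the paper's approach is more elementary in the sense that it avoids stating any auxiliary lemmas, at the cost of a longer and somewhat opaque string of manipulations. Both arrive at the same place, and your remark that $p^2+4q\neq0$ is irrelevant to this lemma is apt.
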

\begin{proof} $$\sum_{i=1}^{n}g((dJ)(X,E_i),JE_i)=\sum_{i=1}^{n}[g((\nabla_X J)E_i,JE_i)-g((\nabla_{E_i}J)X,JE_i)]=$$
$$=\sum_{i=1}^{n}[g(\nabla_X JE_i,JE_i)-g(J(\nabla_XE_i),JE_i)-g(\nabla_{E_i}JX,JE_i)+g(J(\nabla_{E_i}X),JE_i)]=$$
$$=\sum_{i=1}^{n}[{1\over 2}X(g(JE_i,JE_i))-pg(\nabla_XE_i,JE_i)-qg(\nabla_X E_i,E_i)-E_i(g(JX,JE_i))+$$
$$+g(JX,\nabla_{E_i}JE_i)+pg(\nabla_{E_i}X,JE_i)+qg(\nabla_{E_i}X,E_i)]=$$
$$=\sum_{i=1}^{n}[{1\over 2}pX(g(E_i,JE_i))-pX(g(E_i,JE_i))+pg(\nabla_XJE_i,E_i)+$$
$$+pg(X,J(\nabla_{E_i}E_i))-pg(X,\nabla_{E_i}JE_i)]-g(JX,\delta J)=$$
$$=\sum_{i=1}^{n}[-{1\over 2}pg(\nabla_XE_i,JE_i)+{1\over 2}pg(E_i,\nabla_X JE_i)]+pg(X,\delta J)-g(JX,\delta J)=$$
$$=\sum_{i=1}^{n}{1\over 2}pg((\nabla_X J)E_i,E_i)+pg(X,\delta J)-g(JX,\delta J).$$
\end{proof}

As an application, we get the following:

\begin{proposition} Let $(M,J,g)$ be a metallic pseudo-Riemannian manifold such that $J^2=pJ+qI$ with $p^2+4q\neq 0$. Then $dJ=0$ implies $\delta J=0$.
\end{proposition}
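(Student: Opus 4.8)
The plan is to combine the two Lemmas, specialized to the case $dJ=0$, so as to cancel the trace term $\sum_{i}g((\nabla_X J)E_i,E_i)$ and thereby extract a purely algebraic eigenvalue-type equation for the vector field $\delta J$; the hypothesis $p^2+4q\neq 0$ will then force $\delta J$ to vanish.

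First, assuming $dJ=0$, I would invoke the Corollary, which yields
$$g(X,\delta J)=-\sum_{i=1}^{n}g((\nabla_X J)E_i,E_i), \quad X\in C^{\infty}(TM).$$
Next, setting $dJ=0$ in Lemma 2 makes its left-hand side vanish, leaving
$${1\over 2}p\sum_{i=1}^{n}g((\nabla_X J)E_i,E_i)+pg(X,\delta J)-g(JX,\delta J)=0.$$
Substituting the trace expression from the Corollary into this identity eliminates the derivative of $J$ completely and reduces everything to
$$g(JX,\delta J)={1\over 2}pg(X,\delta J), \quad X\in C^{\infty}(TM).$$

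Now I would use that $J$ is $g$-symmetric to write $g(JX,\delta J)=g(X,J\delta J)$, so the relation above becomes $g(X,J\delta J-{1\over 2}p\delta J)=0$ for every $X$. Since $g$ is non-degenerate, this gives the pointwise eigen-equation $J(\delta J)={1\over 2}p\delta J$. Applying $J$ once more and using the metallic relation $J^2=pJ+qI$ on the vector field $\delta J$, I would compare $J^2(\delta J)={p^2\over 4}\delta J$ with $J^2(\delta J)=({p^2\over 2}+q)\delta J$, obtaining $-{1\over 4}(p^2+4q)\delta J=0$. The hypothesis $p^2+4q\neq 0$ then yields $\delta J=0$.

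The only delicate point is the bookkeeping in the substitution step: one must check that the trace term carrying the coefficient ${1\over 2}p$ in Lemma 2 is exactly the quantity supplied by the Corollary, so that it is genuinely cancelled rather than merely reshuffled. After that, the argument is linear algebra, with the non-degeneracy of $g$ converting the scalar identity into an operator equation and the condition $p^2+4q\neq 0$ ruling out ${1\over 2}p$ as an eigenvalue of $J$.
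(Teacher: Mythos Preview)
Your proof is correct and follows essentially the same approach as the paper: combine the two lemmas under the assumption $dJ=0$ to obtain $g(JX-\tfrac{1}{2}pX,\delta J)=0$ for all $X$, and then use $p^2+4q\neq 0$ to rule out $\tfrac{1}{2}p$ as an eigenvalue of $J$. The only cosmetic difference is that the paper invokes invertibility of $J-\tfrac{1}{2}pI$ directly to let $Y=JX-\tfrac{1}{2}pX$ range over all vector fields, whereas you transpose via $g$-symmetry to $J(\delta J)=\tfrac{1}{2}p\,\delta J$ and then iterate $J$; both routes encode the same spectral obstruction.
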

\begin{proof} If $dJ=0$, then:
$$\sum_{i=1}^{n}g((\nabla_X J)E_i,E_i)=-g(X,\delta J)$$
and
$$\sum_{i=1}^{n}{1\over 2}pg((\nabla_X J)E_i,E_i)=-pg(X,\delta J)+g(JX,\delta J).$$
Hence
$$g(JX-{1\over 2}pX,\delta J)=0,$$
for any $X \in C^{\infty}(TM)$. If $p^2+4q\neq 0$, then ${1\over 2}p$ is not an eigenvalue of $J$ and so $J-{1\over 2}pI$ is invertible. In particular, $g(Y,\delta J)=0$, for any $Y \in C^{\infty}(TM)$ and this implies $\delta J=0$.
\end{proof}

Furthermore:

\begin{corollary} Let $(M,J,g)$ be a compact metallic Riemannian manifold such that $J^2=pJ+qI$ with $p^2+4q\neq 0$. Then $J$ is harmonic if and only if
$dJ=0$.
\end{corollary}
\begin{proof} Let $J$ be harmonic. Since $M$ is compact and $g$ is positive definite, $\Delta J=0$ implies $dJ=0$. Conversely, if $dJ=0$, then also $\delta J=0$ and furthrmore $\Delta J=0$. Then we get the statement.
\end{proof}

The vanishing if $dJ$ is also related to integrability, namely we have the following:

\begin{lemma} 
$$(dJ)(JX,Y)+(dJ)(X,JY)-p(dJ)(X,Y)=N_J(X,Y),$$
for any $X$, $Y\in C^{\infty}(TM)$, where $N_J$ is the Nijenhuis tensor of $J$.
\end{lemma}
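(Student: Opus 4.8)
The plan is to expand both sides in terms of the Levi-Civita connection and then reassemble the connection terms into Lie brackets using that $\nabla$ is torsion-free, i.e.\ $\nabla_XY-\nabla_YX=[X,Y]$. First I would record that, since $J$ is a $(1,1)$-tensor (a $TM$-valued $1$-form), the definition of $d$ with $p=1$ gives $(dJ)(X,Y)=(\nabla_XJ)Y-(\nabla_YJ)X$; this is exactly the form already used in the proof of the first Lemma. Abbreviating $A_X:=\nabla_XJ$, the left-hand side becomes
$$A_{JX}Y-A_Y(JX)+A_X(JY)-A_{JY}X-p\,A_XY+p\,A_YX.$$

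The key algebraic input comes from differentiating the defining relation $J^2=pJ+qI$. Applying $\nabla_X$ and using the Leibniz rule for $(1,1)$-tensors yields the identity
$$A_X(JY)+J\,A_XY=p\,A_XY,\qquad\text{i.e.}\qquad A_X(JY)=p\,A_XY-J\,A_XY,$$
valid for all $X,Y$. Substituting this (and its analogue with $X,Y$ interchanged) into the displayed expression, the terms carrying the factor $p$ cancel and I am left with the compact form
$$A_{JX}Y-A_{JY}X+J\bigl(A_YX-A_XY\bigr).$$
Recognizing this cancellation is the conceptual heart of the argument: it is precisely the coefficient $-p$ in the statement that makes the symmetric $\nabla J$-contributions disappear, leaving only terms that will close up into brackets.

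Next I would expand each surviving term through $A_XY=(\nabla_XJ)Y=\nabla_X(JY)-J\,\nabla_XY$ and group. The four ``second-derivative'' pieces combine as follows: $\nabla_{JX}(JY)-\nabla_{JY}(JX)=[JX,JY]$; the pairs $-J\nabla_{JX}Y+J\nabla_Y(JX)$ and $J\nabla_{JY}X-J\nabla_X(JY)$ give $-J[JX,Y]$ and $-J[X,JY]$ respectively; and the remaining terms assemble into $J^2(\nabla_XY-\nabla_YX)=J^2[X,Y]$, all by torsion-freeness. Collecting these produces exactly
$$[JX,JY]-J[JX,Y]-J[X,JY]+J^2[X,Y]=N_J(X,Y),$$
where $J^2=pJ+qI$ identifies the right-hand side with the Nijenhuis tensor of the metallic structure $J$.

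The computation is otherwise routine; the only place demanding care is the bookkeeping in the final grouping, where the $J\nabla$-terms must be paired correctly so that the three bracket expressions emerge and the leftover terms reconstitute $J^2[X,Y]$. I expect the main obstacle to be purely organizational---keeping the antisymmetric pairings straight---rather than any genuine difficulty, since the differentiated identity coming from $J^2=pJ+qI$ already does all the structural work.
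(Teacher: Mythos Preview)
Your argument is correct. The key identity you extract by differentiating $J^2=pJ+qI$, namely $(\nabla_XJ)(JY)+J(\nabla_XJ)Y=p(\nabla_XJ)Y$, is valid, and the subsequent cancellation and bracket-grouping are all accurate.

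The paper takes a slightly different route. Rather than isolating the differentiated relation and using it to kill the $p$-terms at the level of $\nabla J$, the paper expands each of $(dJ)(X,Y)$, $(dJ)(JX,Y)$, $(dJ)(X,JY)$ directly via
\[
(dJ)(X,Y)=[X,JY]+\nabla_{JY}X-[Y,JX]-\nabla_{JX}Y-J[X,Y],
\]
applies $J^2=pJ+qI$ only to objects of the form $[Y,J^2X]$ and $\nabla_{J^2X}Y$ that appear after substituting $JX$ (respectively $JY$), and then lets everything collapse in one large sum. Your approach front-loads the structural input: the differentiated identity explains \emph{why} the coefficient $-p$ in the statement is exactly what is needed, and it reduces the remaining bookkeeping to the compact expression $A_{JX}Y-A_{JY}X+J(A_YX-A_XY)$ before any brackets are assembled. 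The paper's computation is more brute-force but requires no preliminary lemma; yours is shorter and more conceptual, at the cost of invoking the auxiliary identity. Both yield the same four-bracket expression for $N_J$.
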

\begin{proof}
Let $X$, $Y\in C^{\infty}(TM)$. Then
$$
(dJ)(X,Y)=(\nabla_X J)Y-(\nabla_Y
J)X=[X,JY]+\nabla_{JY}X-[Y,JX]-\nabla_{JX}Y-J([X,Y]);
$$
$$
(dJ)(JX,Y)=[JX,JY]+\nabla_{JY}JX-[Y,J^2 X]-\nabla_{J^2X}Y-J([JX,Y])=$$
$$=[JX,JY]+\nabla_{JY}JX-p[Y,J X]-q[Y,X]-p\nabla_{JX}Y-q\nabla_X Y-J([JX,Y]);$$
$$(dJ)(X,JY)=[X,J^2Y]+\nabla_{J^2Y}X-[JY,JX]-\nabla_{JX}JY-J([X,JY])=
$$$$=p[X,JY]+q[X,Y]+p\nabla_{JY}X+q\nabla_Y X-[JY,JX]-\nabla_{JX}JY-J([X,JY]).
$$
Hence:
$$(dJ)(JX,Y)+(dJ)(X,JY)-p(dJ)(X,Y)=[JX,JY]-J([JX,Y])-J([X,JY])+J^2([X,Y])$$
and the proof is complete.
\end{proof}

As a consequence, we get the following:
\begin{proposition}
Let $(M,J,g)$ be a compact metallic Riemannian manifold. If $J$ is harmonic, then it is integrable.
\end{proposition}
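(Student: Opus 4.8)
The plan is to reduce integrability to the vanishing of $dJ$ and then invoke the preceding Lemma, so that no new computation is needed. First I would exploit compactness together with the positive-definiteness of $g$. Since $d$ and $\delta$ are formal adjoints with respect to the $L^2$-inner product induced by $g$ on $\Gamma(\Lambda^\bullet T^*M\otimes TM)$, integrating the pointwise pairing over the compact manifold $M$ yields
$$
\langle\Delta J,J\rangle_{L^2}=\|dJ\|_{L^2}^2+\|\delta J\|_{L^2}^2,
$$
whence $\Delta J=0$ forces $dJ=0$ (and $\delta J=0$). This is exactly the equivalence already recorded in the Preliminaries, and it is the only step that genuinely uses the Riemannian and compactness hypotheses.

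Once $dJ=0$ has been established, the conclusion is immediate from the Lemma proved above, which states that
$$
N_J(X,Y)=(dJ)(JX,Y)+(dJ)(X,JY)-p\,(dJ)(X,Y),
$$
for all $X,Y\in C^\infty(TM)$. Each of the three summands on the right-hand side is a value of the form $dJ$, hence each vanishes identically once $dJ=0$. Therefore $N_J\equiv 0$, which is precisely the integrability of the metallic structure $J$, and the argument is complete.

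I do not anticipate a real obstacle here: the analytic content is confined to the Hodge-theoretic first step, while the algebraic identity relating $dJ$ to the Nijenhuis tensor has already been secured. It is worth emphasizing that this argument never uses the condition $p^2+4q\neq 0$, so integrability follows for arbitrary real parameters $p$ and $q$; that condition was needed earlier only to recover $\delta J=0$ from $dJ=0$, and it plays no role in the present implication.
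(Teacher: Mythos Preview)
Your proposal is correct and follows essentially the same route as the paper: compactness plus positive-definiteness of $g$ give $dJ=0$ from $\Delta J=0$, and then the preceding Lemma expressing $N_J$ in terms of $dJ$ yields integrability. Your explicit mention of the $L^2$-identity and the observation that $p^2+4q\neq 0$ is unnecessary here are welcome elaborations, but the underlying argument is the same as the paper's one-line proof.
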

\begin{proof}
Since $M$ is compact and $g$ is positive definite, $\Delta J=0$ implies $dJ=0$, then we get the statement.
\end{proof}

Moreover:

\begin{proposition} Let $(M,J,g)$ be a compact metallic Riemannian manifold. Then $M$ is locally metallic if and only if $J$ is harmonic and $M$ is nearly K\"ahler manifold.
\end{proposition}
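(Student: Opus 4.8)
The plan is to unwind the definition of \emph{locally metallic}, namely $\nabla J=0$, and to produce this parallelism by decomposing $\nabla J$ into its symmetric and skew-symmetric parts in the first two arguments; the harmonicity hypothesis will annihilate one part and the nearly K\"ahler hypothesis the other.

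For the direct implication I would argue immediately: if $M$ is locally metallic then $\nabla J=0$, so in particular $(\nabla_X J)Y=(\nabla_Y J)X$ and $trace(\nabla J)=0$, giving $dJ=0$ and $\delta J=0$, hence $\Delta J=0$ and $J$ is harmonic; moreover $(\nabla_X J)X=0$ holds trivially, so $M$ is nearly K\"ahler.

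For the converse, which is the real content, I would first invoke compactness and positive-definiteness together with harmonicity. As recorded just after the Definition, for compact $M$ with $g$ positive definite $\Delta J=0$ is equivalent to $dJ=0$ and $\delta J=0$, and $dJ=0$ is equivalent to $(\nabla_X J)Y=(\nabla_Y J)X$ for all $X,Y$; thus $\nabla J$ is symmetric in its first two arguments. On the other hand the nearly K\"ahler condition $(\nabla_X J)X=0$, polarized in $X$, gives $(\nabla_X J)Y+(\nabla_Y J)X=0$, so that $\nabla J$ is skew-symmetric in $X$ and $Y$. Adding the two identities yields $2(\nabla_X J)Y=0$, whence $\nabla J=0$ and $M$ is locally metallic.

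I do not anticipate a genuine obstacle: the only points requiring care are the legitimate use of the equivalence $\Delta J=0\Leftrightarrow(dJ=0,\ \delta J=0)$, which rests precisely on the compactness and positive-definiteness built into the hypotheses, and the correct polarization of the nearly K\"ahler identity. It is worth noting that, in contrast with the earlier Proposition, the condition $p^2+4q\neq 0$ is not needed here, since $\delta J=0$ is already furnished by harmonicity rather than deduced from $dJ=0$.
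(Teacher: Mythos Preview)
Your proposal is correct and follows essentially the same route as the paper: both directions coincide with the paper's argument, using compactness to pass from $\Delta J=0$ to $dJ=0$, reading $dJ=0$ as symmetry of $\nabla J$ in its first two arguments, and combining this with the skew-symmetry encoded by the nearly K\"ahler condition to force $\nabla J=0$. The only cosmetic difference is that you polarize $(\nabla_X J)X=0$ explicitly, whereas the paper states the antisymmetry $(\nabla_X J)Y=-(\nabla_Y J)X$ directly as the nearly K\"ahler condition.
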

\begin{proof} If $M$ is locally metallic, i.e. $\nabla J=0$, then $J$ is harmonic and $(M,J,g)$ is nearly K\"ahler.

Conversely, if $(M,J,g)$ is nearly K\"ahler, then $(\nabla_X J)Y=-(\nabla_Y
J)X$ and from $dJ=0$ we get $\nabla J=0$.
\end{proof}

From $\nabla I=0$ we have $\Delta I=0$ and we get the followings:

\begin{proposition} Let $(M,J,g)$ be a metallic pseudo-Riemannian manifold such that $J^2=pJ+qI$ with $p^2+4q>0$ and let $J_p={{1}\over{\sqrt{p^2+4q}}}(2J-pI)$ be the almost product structure on $M$ associated to $J$. Then $J$ is harmonic if and only if $J_p$ is harmonic.
\end{proposition}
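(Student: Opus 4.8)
The plan is to exploit the $\mathbb{R}$-linearity of the Hodge--Laplace operator together with the fact $\Delta I=0$ recalled just above the statement. Set $c:=\sqrt{p^2+4q}$, which is a nonzero real constant precisely because $p^2+4q>0$. The first thing I would record is that $J_p$ is indeed an almost product structure: from $J_p=\frac{1}{c}(2J-pI)$ and $J^2=pJ+qI$ one computes $J_p^2=\frac{1}{c^2}(4J^2-4pJ+p^2I)=\frac{1}{c^2}(4qI+p^2I)=I$. The genuinely useful structural fact, however, is not this but the observation that the defining relation expresses $J_p$ as an affine combination of $J$ and $I$ with \emph{constant} coefficients $2/c$ and $-p/c$.

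The second step is to note that $\Delta=d\circ\delta+\delta\circ d$ is $\mathbb{R}$-linear on $\Gamma(\Lambda^1 T^*M\otimes TM)$. This is immediate from the definitions of $d$ and $\delta$ given in the Preliminaries: both are assembled pointwise from the covariant derivatives $\nabla_X T$, and $\nabla_X$ is additive in its tensor argument and commutes with multiplication by real constants. Hence $\Delta(aS+bT)=a\,\Delta S+b\,\Delta T$ for all real constants $a,b$ and all $(1,1)$-tensor fields $S,T$.

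The final step simply combines the two observations. Applying $\Delta$ to $J_p=\frac{1}{c}(2J-pI)$ and using linearity together with $\Delta I=0$ gives $\Delta J_p=\frac{1}{c}\bigl(2\,\Delta J-p\,\Delta I\bigr)=\frac{2}{c}\,\Delta J$. Since $c\neq 0$, this shows $\Delta J_p=0$ if and only if $\Delta J=0$, i.e. $J_p$ is harmonic if and only if $J$ is harmonic, which is the assertion.

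I do not expect any real obstacle: the whole content reduces to the linearity of $\Delta$ and the parallelism of the identity endomorphism. The single point that deserves a moment's care is verifying that the coefficients $2/c$ and $-p/c$ are true constants, so that they pass through $\nabla$ and therefore through $d$, $\delta$ and $\Delta$; this is exactly what $p^2+4q>0$ guarantees. Were $p$ and $q$ allowed to vary as functions on $M$ the argument would collapse, but in the metallic setting they are fixed real numbers.
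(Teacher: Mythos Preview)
Your argument is correct and is exactly the paper's approach: the paper's proof is the single line $\Delta J_p=\frac{2}{\sqrt{p^2+4q}}\,\Delta J$, which is precisely what you obtain from linearity of $\Delta$ together with $\Delta I=0$. You have simply spelled out the details behind that one line.
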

\begin{proof}
$\Delta J_p={{2}\over{\sqrt{p^2+4q}}}\Delta J$ and the proof is complete.
\end{proof}

\begin{proposition} Let $(M,J,g)$ be a metallic pseudo-Riemannian manifold such that $J^2=pJ+qI$ with $p^2+4q<0$ and let $J_c={{1}\over{\sqrt{-p^2-4q}}}(2J-pI)$ be the Norden structure on $M$ associated to $J$. Then $J$ is harmonic if and only if $J_c$ is harmonic.
\end{proposition}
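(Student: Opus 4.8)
The plan is to proceed exactly as in the proof of the preceding proposition, exploiting the $\mathbb{R}$-linearity of the Hodge-Laplace operator $\Delta$ on $\Gamma(\Lambda^p T^*M \otimes TM)$ together with the already-noted fact that $\Delta I = 0$ (a consequence of $\nabla I = 0$). The essential observation is that, by its very definition, $J_c = \frac{1}{\sqrt{-p^2-4q}}(2J - pI)$ is an affine combination of the two tensors $J$ and $I$ with constant coefficients, the leading coefficient $\frac{2}{\sqrt{-p^2-4q}}$ being well defined and nonzero precisely because $p^2 + 4q < 0$.

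First I would apply $\Delta$ to the expression for $J_c$ and use linearity together with $\Delta I = 0$ to obtain
$$\Delta J_c = \frac{1}{\sqrt{-p^2-4q}}\bigl(2\,\Delta J - p\,\Delta I\bigr) = \frac{2}{\sqrt{-p^2-4q}}\,\Delta J.$$
Since the scalar factor $\frac{2}{\sqrt{-p^2-4q}}$ is nonzero, this identity shows that $\Delta J_c = 0$ holds if and only if $\Delta J = 0$, which is exactly the asserted equivalence: $J_c$ is harmonic if and only if $J$ is harmonic.

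There is no genuine obstacle here; the argument is a direct transcription of the $p^2 + 4q > 0$ case, with $\sqrt{p^2+4q}$ replaced by $\sqrt{-p^2-4q}$. The only point worth checking for completeness is that $J_c$ really is a Norden structure, i.e. that $J_c^2 = -I$; this follows by expanding $(2J-pI)^2 = 4J^2 - 4pJ + p^2 I$ and substituting $J^2 = pJ + qI$, which yields $(2J-pI)^2 = (p^2+4q)I$ and hence $J_c^2 = \frac{p^2+4q}{-p^2-4q}\,I = -I$, together with the $g$-symmetry of $J_c$ inherited from that of $J$ and $I$. This verification, however, only justifies the terminology in the statement and is not needed for the harmonicity equivalence itself, which rests solely on linearity of $\Delta$ and on $\Delta I = 0$.
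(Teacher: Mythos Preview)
Your proof is correct and follows exactly the same approach as the paper: the paper's own argument is the single line $\Delta J_c = \frac{2}{\sqrt{-p^2-4q}}\,\Delta J$, which is precisely what you derive from linearity of $\Delta$ and $\Delta I = 0$. Your additional verification that $J_c^2 = -I$ is a nice touch but, as you note, not required for the equivalence.
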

\begin{proof}
$\Delta J_c={{2}\over{\sqrt{-p^2-4q}}}\Delta J$ and the proof is complete.
\end{proof}

\subsection{Bochner formula}

We know that for any tangent bundle-valued differential form, $T\in \Gamma(\Lambda^1T^*M\otimes TM)$, the following Weitzenb\"{o}ck formula holds \cite{xi}:
$$
\Delta T=-\nabla^2T-S,
$$
where
$\nabla^2T:=\sum_{i=1}^{n}(\nabla_{E_i}\nabla_{E_i}T-\nabla_{\nabla_{E_i}E_i}T)$
and $SX:=\sum_{i=1}^{n}(R(E_i,X)T)E_i$, $X\in C^{\infty}(TM)$, for
$\{E_i\}_{1\leq i\leq n}$ a $g$-orthonormal frame field and
$R(X,Y):=\nabla_X\nabla_Y-\nabla_Y\nabla_X-\nabla_{[X,Y]}$, $X$,
$Y\in C^{\infty}(TM)$, the Riemann curvature tensor field. We shall also
use the notation $R(X,Y,Z,W)=:g(R(X,Y)Z,W)$,
$X$, $Y$, $Z$, $W\in C^{\infty}(TM)$. \\
On the metallic pseudo-Riemannian manifold $(M,J,g)$, taking $T$ equal to $J$, for
any vector field $X$, we have
$$SX:=\sum_{i=1}^{n}(R(E_i,X)J)E_i=\sum_{i=1}^{n}[R(E_i,X)JE_i-J(R(E_i,X)E_i)].$$

We can state:

\begin{proposition}
Let $(M,J,g)$ be an $n$-dimensional metallic pseudo-Riemannian manifold. If $J$ is harmonic, then
$$
|\nabla{J}|^2=\sum_{1\leq
i,j \leq n}R(E_i,E_j,JE_i,J
E_j)+p\cdot trace (J\circ Q)-q \cdot scal,
$$
for $\{E_i\}_{1\leq i\leq n}$ a $g$-orthonormal frame field
in a neighborhood of a point $x\in M$ such that $({{\nabla}_{E_i} E_j})(x)=0$, $1\leq i,j\leq n$, $Q$ the Ricci operator defined by $g(QX,Y):=Ric(X,Y)$ and $scal$ the scalar curvature of $(M,g)$.
\end{proposition}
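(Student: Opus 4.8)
The plan is to run the Bochner technique: pair the Weitzenb\"ock formula $\Delta J=-\nabla^2J-S$ with $J$ itself, evaluate at the point $x$ in the frame with $(\nabla_{E_i}E_j)(x)=0$ so that the first-order connection terms drop out, and then convert harmonicity into an algebraic identity relating $|\nabla J|^2$ to the curvature term $\langle S,J\rangle$. Throughout I write $\langle A,B\rangle:=\sum_i g(AE_i,BE_i)$ for the induced fibre metric on $\Gamma(\Lambda^1T^*M\otimes TM)$, so that $|\nabla J|^2=\sum_i\langle\nabla_{E_i}J,\nabla_{E_i}J\rangle$.

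The first ingredient is that $\langle J,J\rangle$ is constant. Since $J$ is $g$-symmetric and $J^2=pJ+qI$, we have $g(JE_k,JE_k)=g(J^2E_k,E_k)=p\,g(JE_k,E_k)+q$, hence $\langle J,J\rangle=p\cdot trace(J)+qn$; and $trace(J)$ is locally constant because the eigenvalues of $J$ are the constant metallic numbers $\tfrac12(p\pm\sqrt{p^2+4q})$, so the eigenspace dimensions are locally constant (the cases $p^2+4q\le 0$ simply give $trace(J)=\tfrac{p}{2}n$). Therefore $\langle\nabla_{E_i}J,J\rangle=\tfrac12 E_i\langle J,J\rangle\equiv 0$ for each $i$. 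Differentiating this identity gives $\langle\nabla_{E_i}\nabla_{E_i}J,J\rangle=-\langle\nabla_{E_i}J,\nabla_{E_i}J\rangle$, and summing over $i$ at $x$ (where $\nabla^2J=\sum_i\nabla_{E_i}\nabla_{E_i}J$) yields $\langle\nabla^2J,J\rangle=-|\nabla J|^2$.

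Next, harmonicity $\Delta J=0$ together with the Weitzenb\"ock formula gives $\nabla^2J=-S$, so $\langle\nabla^2J,J\rangle=-\langle S,J\rangle$, and combining with the previous step produces $|\nabla J|^2=\langle S,J\rangle$. It then remains to expand
$\langle S,J\rangle=\sum_{i,j}g\big(R(E_i,E_j)JE_i-J(R(E_i,E_j)E_i),\,JE_j\big)$.
The first summand is already $\sum_{i,j}R(E_i,E_j,JE_i,JE_j)$, the leading term of the claimed formula. For the second summand I would invoke the metallic relation in the form $g(JA,JB)=g(A,J^2B)=p\,g(A,JB)+q\,g(A,B)$ with $A=R(E_i,E_j)E_i$ and $B=E_j$, splitting it into a $p$-weighted sum $\sum_{i,j}R(E_i,E_j,E_i,JE_j)$ and a $q$-weighted sum $\sum_{i,j}R(E_i,E_j,E_i,E_j)$.

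The main work, and the step most exposed to error, is the contraction of these two remaining sums. Using the antisymmetry $R(\,\cdot\,,\,\cdot\,,Z,W)=-R(\,\cdot\,,\,\cdot\,,W,Z)$ and the pair symmetry of $R$, the sum $\sum_{i,j}R(E_i,E_j,E_i,JE_j)$ reduces to a Ricci contraction and hence, via $g(QX,Y)=Ric(X,Y)$ and the $g$-symmetry of $J$, to $trace(J\circ Q)$, while $\sum_{i,j}R(E_i,E_j,E_i,E_j)$ reduces to the scalar curvature $scal$; reinserting the coefficients $p$ and $q$ then gives the stated identity. The only genuine obstacle is the bookkeeping: one must fix the $Ric$ and $scal$ sign conventions compatibly with the declared curvature convention $R(X,Y)=\nabla_X\nabla_Y-\nabla_Y\nabla_X-\nabla_{[X,Y]}$ and track each index swap, since the signs of the $trace(J\circ Q)$ and $scal$ terms are entirely determined by these choices; the remaining manipulations are routine.
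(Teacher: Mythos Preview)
Your argument is correct and follows essentially the same Bochner approach as the paper: pair the Weitzenb\"ock formula with $J$, use $\langle\nabla^2 J,J\rangle=-|\nabla J|^2$, and expand $\langle S,J\rangle$ via the metallic relation $J^2=pJ+qI$ to produce the curvature, Ricci, and scalar terms. The paper is terser, citing \cite{ji} for the first step where you supply an explicit justification that $\langle J,J\rangle=p\cdot trace(J)+qn$ is locally constant.
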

\begin{proof}
A similar computation like in \cite{ji} leads us to
$$
\langle \nabla^2J,J\rangle=
\sum_{i=1}^{n}\langle\nabla_{E_i}\nabla_{E_i}J,J\rangle=
-|\nabla{J}|^2
$$
and
$$
\langle S,J\rangle=\sum_{j=1}^{n}g(
SE_j,JE_j)= \sum_{1\leq
i,j \leq n}g(R(E_i,E_j)JE_i,J
E_j)-\sum_{1\leq
i,j \leq n}g(J(R(E_i,E_j)E_i),J
E_j)=$$$$=\sum_{1\leq
i,j \leq n}R(E_i,E_j,JE_i,J
E_j)-\sum_{1\leq
i,j \leq n}R(E_i,E_j,E_i,J^2
E_j)=$$$$=\sum_{1\leq
i,j \leq n}R(E_i,E_j,JE_i,J
E_j)+p\sum_{j=1}^nRic(E_j,JE_j)-q \cdot scal=$$$$=\sum_{1\leq
i,j \leq n}R(E_i,E_j,JE_i,J
E_j)+p\cdot trace (J\circ Q)-q \cdot scal.
$$

Therefore
$$
0=\langle \Delta J,
J\rangle=-\langle \nabla^2
J,J\rangle-\langle S,J\rangle=$$$$=
|\nabla{J}|^2-\sum_{1\leq
i,j \leq n}R(E_i,E_j,JE_i,J
E_j)-p\cdot trace (J\circ Q)+q \cdot scal.
$$
\end{proof}

\begin{remark}
If $(M,J,g)$ is a locally metallic pseudo-Riemannian manifold, then $\langle \Delta J,J\rangle=0$.
\end{remark}

\section{Harmonic maps and harmonic metallic structures}

Let $(M,J,g)$ and $(\bar{M},\bar{g},\bar{J})$ be two $n$-dimensional metallic Riemannian manifolds. Denote by $\nabla$ and respectively, $\bar{\nabla}$ the Levi-Civita connections associated to $g$ and respectively, $\bar{g}$.

Consider $\Phi:(M,J,g)\rightarrow (\bar{M},\bar{J},\bar{g})$ a smooth map and let $$\tau(\Phi):=\sum_{i=1}^{n}[\bar{\nabla}_{\Phi_*E_i}\Phi_*E_i-\Phi_*(\nabla_{E_i}E_i)]$$ be \textit{the tension field} of $\Phi$, where $\{E_i\}_{1\leq i\leq n}$ is a $g$-orthonormal frame field
on $TM$.

\begin{proposition}\label{p}
Let $\Phi:(M,J,g)\rightarrow (\bar{M},\bar{J},\bar{g})$ be a metallic isometry. Then
$$
\bar{J}(\tau(\Phi))+\Phi_*(\delta J)-\delta \bar{J}=
\sum_{i=1}^{n}[\bar{\nabla}_{\Phi_*E_i}\Phi_*(JE_i)-\Phi_*(\nabla_{E_i}JE_i)],
$$
for $\{E_i\}_{1\leq i\leq n}$ a $g$-orthonormal frame field
on $TM$.
\end{proposition}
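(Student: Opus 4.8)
The plan is to unwind all three terms on the left-hand side using the definition of the codifferential together with the two structural hypotheses---that $\Phi$ is an isometry and that it intertwines the two metallic structures, $\bar{J}\circ\Phi_*=\Phi_*\circ J$---and then to observe that the bulk of the resulting terms cancel in pairs, leaving exactly the right-hand side.

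First I would record the consequence of the hypotheses that drives the computation. Since $\Phi$ is an isometry, the pushed-forward frame $\{\Phi_* E_i\}_{1\le i\le n}$ is $\bar{g}$-orthonormal along $\Phi$, so $\delta\bar{J}$ may be evaluated using precisely this frame:
$$\delta\bar{J} = -\sum_{i=1}^{n} (\bar{\nabla}_{\Phi_* E_i}\bar{J})\Phi_* E_i = -\sum_{i=1}^{n}\left[\bar{\nabla}_{\Phi_* E_i}(\bar{J}\,\Phi_* E_i) - \bar{J}(\bar{\nabla}_{\Phi_* E_i}\Phi_* E_i)\right].$$
The intertwining relation then replaces $\bar{J}\,\Phi_* E_i$ by $\Phi_*(JE_i)$, yielding an expression for $-\delta\bar{J}$ in terms of the summands $\bar{\nabla}_{\Phi_* E_i}\Phi_*(JE_i)$ and $\bar{J}(\bar{\nabla}_{\Phi_* E_i}\Phi_* E_i)$.

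Next I would expand the other two terms. Applying $\bar{J}$ to the definition of the tension field and using $\bar{J}\circ\Phi_*=\Phi_*\circ J$ on the second summand produces $\bar{J}(\bar{\nabla}_{\Phi_* E_i}\Phi_* E_i)$ together with $-\Phi_*(J\,\nabla_{E_i}E_i)$. For the middle term, I would write $\delta J=-\sum_i[\nabla_{E_i}(JE_i)-J(\nabla_{E_i}E_i)]$ and push forward, so that $\Phi_*(\delta J)$ appears as a combination of $-\Phi_*(\nabla_{E_i}(JE_i))$ and $+\Phi_*(J\,\nabla_{E_i}E_i)$.

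Finally I would add the three expansions. The terms $\bar{J}(\bar{\nabla}_{\Phi_* E_i}\Phi_* E_i)$ coming from $\bar{J}(\tau(\Phi))$ and from $-\delta\bar{J}$ cancel, and the terms $\Phi_*(J\,\nabla_{E_i}E_i)$ coming from $\bar{J}(\tau(\Phi))$ and from $\Phi_*(\delta J)$ cancel. What survives is exactly $\sum_i[\bar{\nabla}_{\Phi_* E_i}\Phi_*(JE_i)-\Phi_*(\nabla_{E_i}(JE_i))]$, the claimed identity. There is no genuine analytic difficulty in this argument; the only point demanding care---and the step I expect to be the crux---is the justification that $\{\Phi_* E_i\}$ is $\bar{g}$-orthonormal and therefore admissible for computing $\delta\bar{J}$ along $\Phi$, which is precisely where the isometry hypothesis is used, just as the intertwining hypothesis is used to convert every $\bar{J}$ applied to a pushforward into a pushforward of $J$.
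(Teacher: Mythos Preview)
Your proposal is correct and follows essentially the same approach as the paper's proof, which simply instructs the reader to expand $\delta J=-\sum_i[\nabla_{E_i}JE_i-J(\nabla_{E_i}E_i)]$ and substitute into the left-hand side. You have merely made explicit the companion expansions of $\bar{J}(\tau(\Phi))$ and $\delta\bar{J}$ (computed in the pushed-forward frame), together with the use of the isometry and intertwining hypotheses, that the paper leaves implicit.
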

\begin{proof}
Express $\delta J=-\sum_{i=1}^{n}(\nabla_{E_i} J)E_i=-\sum_{i=1}^{n}[\nabla_{E_i}JE_i-J(\nabla_{E_i}E_i)]$ and replace it in the left-side of the relation.
\end{proof}

\begin{corollary}
Let $\Phi:(M,g,J)\rightarrow (\bar{M},\bar{g},\bar{J})$ be a metallic isometry. If there exists a $g$-orthonormal frame field $\{E_i\}_{1\leq i\leq n}$
on $TM$ such that $\Phi_*((\nabla_{E_i}J)E_i)=(\bar{\nabla}_{\Phi_*{E_i}}\bar{J})(\Phi_*E_i)$, then
$$
\delta \bar{J}=
\Phi_*(\delta J).
$$
\end{corollary}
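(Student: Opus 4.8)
The plan is to feed the hypothesis into the identity of Proposition \ref{p} and show that its right-hand side collapses to $\bar{J}(\tau(\Phi))$, so that the two $\bar{J}(\tau(\Phi))$ contributions cancel and only $\Phi_*(\delta J)-\delta\bar{J}$ survives. First I would rewrite each summand on the right-hand side of Proposition \ref{p}. Since $\Phi$ is a metallic isometry, the intertwining relation $\Phi_*\circ J=\bar{J}\circ\Phi_*$ holds, so $\Phi_*(JE_i)=\bar{J}(\Phi_*E_i)$. Applying the Leibniz rule for the pullback connection $\bar{\nabla}$ to the first term gives
$$\bar{\nabla}_{\Phi_*E_i}\Phi_*(JE_i)=(\bar{\nabla}_{\Phi_*E_i}\bar{J})(\Phi_*E_i)+\bar{J}(\bar{\nabla}_{\Phi_*E_i}\Phi_*E_i),$$
while for the second term I expand $\nabla_{E_i}JE_i=(\nabla_{E_i}J)E_i+J(\nabla_{E_i}E_i)$ and push it through $\Phi_*$, using the intertwining relation once more, to obtain
$$\Phi_*(\nabla_{E_i}JE_i)=\Phi_*((\nabla_{E_i}J)E_i)+\bar{J}(\Phi_*(\nabla_{E_i}E_i)).$$

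Next I would subtract these two expressions term by term. The hypothesis $\Phi_*((\nabla_{E_i}J)E_i)=(\bar{\nabla}_{\Phi_*E_i}\bar{J})(\Phi_*E_i)$ is exactly what cancels the two ``derivative-of-structure'' terms, leaving
$$\bar{\nabla}_{\Phi_*E_i}\Phi_*(JE_i)-\Phi_*(\nabla_{E_i}JE_i)=\bar{J}\big(\bar{\nabla}_{\Phi_*E_i}\Phi_*E_i-\Phi_*(\nabla_{E_i}E_i)\big).$$
Summing over $i$ and factoring $\bar{J}$ out of the sum, the bracketed expression assembles into precisely the definition of the tension field, so that the entire right-hand side of Proposition \ref{p} equals $\bar{J}(\tau(\Phi))$.

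Finally I would substitute this back into Proposition \ref{p}, whose left-hand side reads $\bar{J}(\tau(\Phi))+\Phi_*(\delta J)-\delta\bar{J}$ and now equals $\bar{J}(\tau(\Phi))$; cancelling this common term yields $\delta\bar{J}=\Phi_*(\delta J)$. The only delicate point is the bookkeeping when factoring $\bar{J}$ out of the summation so as to recognize the tension field; I expect no genuine obstacle, since the computation is entirely algebraic once the Leibniz rule and the intertwining relation are in hand. It is worth noting that the argument does not require $\tau(\Phi)$ to vanish, as the two $\bar{J}(\tau(\Phi))$ terms cancel against each other regardless.
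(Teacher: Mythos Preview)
Your proof is correct and follows essentially the same route as the paper: both arguments expand each summand on the right-hand side of Proposition~\ref{p} via the Leibniz rule and the intertwining relation $\Phi_*\circ J=\bar J\circ\Phi_*$, use the hypothesis to cancel the ``derivative-of-structure'' terms, recognize the remaining expression as $\bar J(\tau(\Phi))$, and then cancel it against the left-hand side. Your write-up is slightly more explicit in separating the two Leibniz expansions before subtracting, but the content is the same.
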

\begin{proof}
We have $\bar{\nabla}_{\Phi_*E_i}\bar{J}(\Phi_*E_i)-\Phi_*(\nabla_{E_i}JE_i)=\bar{J}(\bar{\nabla}_{\Phi_*E_i}\Phi_*E_i)-\Phi_*(J(\nabla_{E_i}E_i))$ and we get
$$\bar{J}(\tau(\Phi))+\Phi_*(\delta J)- \delta \bar{J}=\bar{J}(\sum_{i=1}^{n}[\bar{\nabla}_{\Phi_*E_i}\Phi_*E_i-\Phi_*(\nabla_{E_i}E_i)])=\bar{J}(\tau(\Phi)).$$
\end{proof}

\begin{definition}
A smooth map $\Phi:(M,J,g)\rightarrow (\bar{M},\bar{J},\bar{g})$ is said to be \textit{harmonic} if its tension field
$\tau(\Phi)$ vanishes.
\end{definition}

\begin{proposition}
Let $\Phi:(M,J,g)\rightarrow (\bar{M},\bar{J},\bar{g})$ be a metallic isometry. If $\Phi$ is a harmonic map, then
$$
\Phi_*(\delta J)=\delta \bar{J}+
\sum_{i=1}^{n}[\bar{\nabla}_{\Phi_*E_i}\Phi_*(JE_i)-\Phi_*(\nabla_{E_i}JE_i)],
$$
for $\{E_i\}_{1\leq i\leq n}$ a $g$-orthonormal frame field
on $TM$.

Moreover, if there exists a $g$-orthonormal frame field $\{E_i\}_{1\leq i\leq n}$
such that $\Phi_*((\nabla_{E_i}J)E_i)=(\bar{\nabla}_{\Phi_*{E_i}}\bar{J})(\Phi_*E_i)$, then
$$
\sum_{i=1}^{n}[\bar{\nabla}_{\Phi_*E_i}\Phi_*(JE_i)-\Phi_*(\nabla_{E_i}JE_i)]=0.
$$
\end{proposition}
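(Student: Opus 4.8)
The plan is to derive both assertions directly from Proposition~\ref{p} together with the corollary that follows it, so that essentially no new computation is required; all the analytic work has already been packaged into those two earlier results.

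For the first assertion, I would start from the identity of Proposition~\ref{p},
$$
\bar{J}(\tau(\Phi))+\Phi_*(\delta J)-\delta \bar{J}=
\sum_{i=1}^{n}[\bar{\nabla}_{\Phi_*E_i}\Phi_*(JE_i)-\Phi_*(\nabla_{E_i}JE_i)],
$$
and impose the harmonicity hypothesis. Since $\Phi$ is a harmonic map, its tension field vanishes, $\tau(\Phi)=0$, and because $\bar{J}$ acts linearly on the bundle we get $\bar{J}(\tau(\Phi))=0$. The first term on the left then drops out, and rearranging gives exactly
$$
\Phi_*(\delta J)=\delta \bar{J}+
\sum_{i=1}^{n}[\bar{\nabla}_{\Phi_*E_i}\Phi_*(JE_i)-\Phi_*(\nabla_{E_i}JE_i)].
$$

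For the second assertion, I would invoke the corollary following Proposition~\ref{p}: under the compatibility hypothesis $\Phi_*((\nabla_{E_i}J)E_i)=(\bar{\nabla}_{\Phi_*{E_i}}\bar{J})(\Phi_*E_i)$, that corollary yields $\delta \bar{J}=\Phi_*(\delta J)$. Substituting this equality into the equation just obtained, the terms $\Phi_*(\delta J)$ and $\delta\bar{J}$ cancel, leaving the sum equal to zero. The only point requiring a moment's care is that the frame field appearing in the compatibility hypothesis must be the same one used in Proposition~\ref{p}; since both statements hold for an arbitrary $g$-orthonormal frame, this causes no difficulty, and there is accordingly no genuine computational obstacle in the argument.
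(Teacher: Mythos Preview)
Your proposal is correct and follows exactly the route the paper intends: the proposition is stated without proof precisely because it is an immediate consequence of Proposition~\ref{p} (setting $\tau(\Phi)=0$) together with the subsequent corollary (giving $\delta\bar{J}=\Phi_*(\delta J)$ under the compatibility hypothesis), which is precisely what you do.
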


\begin{corollary}
Let $\Phi:(M,g,J)\rightarrow (\bar{M},\bar{g},\bar{J})$ be a metallic isometry and assume that $J$ is a harmonic metallic structure.
\begin{enumerate}
  \item If there exists a $g$-orthonormal frame field $\{E_i\}_{1\leq i\leq n}$
on $TM$ such that $\Phi_*((\nabla_{E_i}J)E_i)=(\bar{\nabla}_{\Phi_*{E_i}}\bar{J})(\Phi_*E_i)$, then
$\delta \bar{J}=0$, hence $\bar{J}$ is a harmonic metallic structure, too.
  \item If $\Phi$ is a harmonic map, then
$$\delta \bar{J}=
-\sum_{i=1}^{n}[\bar{\nabla}_{\Phi_*E_i}\Phi_*(JE_i)-\Phi_*(\nabla_{E_i}JE_i)],$$
for $\{E_i\}_{1\leq i\leq n}$ a $g$-orthonormal frame field
on $TM$.
\end{enumerate}
\end{corollary}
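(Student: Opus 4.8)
The plan is to reduce both assertions to the two transfer relations already established for a metallic isometry, the only extra ingredient being that harmonicity of $J$ forces $\delta J=0$. Indeed, by the earlier characterization of harmonic metallic structures on a compact Riemannian manifold, $\Delta J=0$ splits into $dJ=0$ together with $\delta J=0$; in particular $\delta J=0$, and hence $\Phi_*(\delta J)=0$ by linearity of $\Phi_*$. Everything else is a substitution into identities proved immediately above.

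For the first assertion I would start from the Corollary following Proposition \ref{p}, which under the frame-field hypothesis $\Phi_*((\nabla_{E_i}J)E_i)=(\bar{\nabla}_{\Phi_*E_i}\bar{J})(\Phi_*E_i)$ gives $\delta\bar{J}=\Phi_*(\delta J)$. Inserting $\delta J=0$ yields $\delta\bar{J}=0$ at once. To upgrade this to the full harmonicity of $\bar{J}$, that is $\Delta\bar{J}=0$, I would complement $\delta\bar{J}=0$ with $d\bar{J}=0$: since $\Phi$ is a metallic isometry it intertwines $J$ with $\bar{J}$ and $\nabla$ with $\bar{\nabla}$, so it carries the tangent bundle-valued $2$-form $dJ$ to $d\bar{J}$ along $\Phi$; as $J$ harmonic gives $dJ=0$, we obtain $d\bar{J}=0$, whence $\Delta\bar{J}=d(\delta\bar{J})+\delta(d\bar{J})=0$.

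For the second assertion I would instead invoke the Proposition stated for harmonic maps, whose first displayed identity reads $\Phi_*(\delta J)=\delta\bar{J}+\sum_{i=1}^{n}[\bar{\nabla}_{\Phi_*E_i}\Phi_*(JE_i)-\Phi_*(\nabla_{E_i}JE_i)]$. Substituting $\Phi_*(\delta J)=0$ and solving for $\delta\bar{J}$ reproduces exactly the claimed formula $\delta\bar{J}=-\sum_{i=1}^{n}[\bar{\nabla}_{\Phi_*E_i}\Phi_*(JE_i)-\Phi_*(\nabla_{E_i}JE_i)]$. No curvature or Weitzenb\"{o}ck input is needed here.

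The only genuinely delicate point is the passage from $\delta\bar{J}=0$ to the full harmonicity of $\bar{J}$ in the first assertion, since one must also secure $d\bar{J}=0$. I expect this to rest on two facts worth making explicit: first, that $J$ harmonic does entail $\delta J=0$, which uses that $g$ is Riemannian and, for the splitting of $\Delta$, that $M$ is compact, exactly as in the earlier Corollary; and second, that the metallic isometry $\Phi$ transports the vanishing of $dJ$ to that of $d\bar{J}$. If $\Phi$ is only an isometric immersion, this transport gives $d\bar{J}=0$ merely along $\Phi(M)$, so to claim harmonicity of $\bar{J}$ on all of $\bar{M}$ one should take $\Phi$ to be onto, or read the conclusion as holding along the image.
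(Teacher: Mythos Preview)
Your derivations are exactly the ones the paper intends: the corollary is stated without proof, and both parts are immediate consequences of the preceding results---Corollary~3.2 gives $\delta\bar J=\Phi_*(\delta J)$ under the frame-field hypothesis, and Proposition~3.4 gives the identity you substitute into for part~2. Using $\delta J=0$ from harmonicity of $J$ then finishes both items.

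Where you go beyond the paper is in the passage from $\delta\bar J=0$ to the full conclusion ``$\bar J$ is harmonic''. The paper asserts this with the word ``hence'' but offers no argument; you correctly note that one must also secure $d\bar J=0$, and you propose to transport $dJ=0$ along the metallic isometry. Your caveats here are apt and, if anything, sharper than what the paper provides: the implication $\Delta J=0\Rightarrow dJ=0,\ \delta J=0$ indeed uses compactness and a Riemannian metric (as recorded earlier in the paper), and the transport of $d\bar J=0$ to all of $\bar M$ needs $\Phi$ surjective. One further point worth flagging: if ``metallic isometry'' is read in the strong sense (a Riemannian isometry intertwining $J$ and $\bar J$), then $\Phi_*$ already intertwines the Levi--Civita connections, so the frame-field condition in part~1 is automatic and $\tau(\Phi)=0$ holds as well---which makes several hypotheses redundant. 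This is a tension in the paper's formulation rather than in your argument.
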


\begin{remark}
If $\Phi:(M,g,J)\rightarrow (\bar{M},\bar{g},\bar{J})$ is a metallic isometry, then either $p=\bar{p}$ and $q=\bar{q}$ or $J$ and $\bar{J}$ are trivial metallic structures, namely equal to $\frac{\bar{q}-q}{p-\bar{p}}I$, for $p\neq \bar{p}$.
Indeed, for any $X$, $Y\in C^{\infty}(TM)$, we have:
$$pg(JX,Y)+qg(X,Y)=g(JX,JY)=\bar{g}(\Phi_*(JX),\Phi_*(JY))=\bar{g}(\bar{J}(\Phi_*X),\bar{J}(\Phi_*Y))=$$$$=
\bar{p}\bar{g}(\bar{J}(\Phi_*X),\Phi_*Y)+\bar{q}\bar{g}(\Phi_*X,\Phi_*Y)=\bar{p}\bar{g}(\Phi_*(JX),\Phi_*Y)+\bar{q}\bar{g}(\Phi_*X,\Phi_*Y)=$$$$=
\bar{p}g(JX,Y)+\bar{q}g(X,Y),$$
which implies $(p-\bar{p})J=(\bar{q}-q)I$ and similarly, $(p-\bar{p})\bar{J}=(\bar{q}-q)I$.
\end{remark}

\section{Harmonic generalized metallic structures}

\subsection{Metallic structures on the generalized tangent bundle}

Let $TM\oplus T^*M$ be the generalized tangent bundle of $M$ and let $(\hat J, \hat g)$ be the generalized metallic pseudo-Riemannian structure induced by $(J,g)$ \cite{bn}. In block matrix form, $\hat{J}$ is written as
$$\hat{J}=\begin{pmatrix}
               J & 0 \\
               \flat_g & -J^*+pI \\
         \end{pmatrix}$$
and the metric $\hat{g}$ is given by
$$
\hat{g}(X+\alpha,Y+\beta)=g(X,Y)+g(\sharp_g\alpha,\sharp_g\beta)+{1\over{p^2+4q}}[p(\alpha(Y)+\beta(X))-2(\alpha(JY)+\beta(JX))]=$$
$$=g(X,Y)+g(\sharp_g\alpha,\sharp_g\beta)+{{{\sqrt{\vert p^2+4q\vert}}}\over{p^2+4q}}[\alpha(\tilde J Y)+\beta(\tilde J X)],$$
for any $X$, $Y\in C^{\infty}(TM)$ and $\alpha$, $\beta\in C^{\infty}(T^*M)$, where $\flat_g$ and $\sharp_g$ are the musical isomorphisms induced by $g$.

Let $\nabla$ be the Levi-Civita connection of $g$ and let $\hat \nabla$ be the induced generalized connection \cite{bn}:
$$\hat \nabla :C^{\infty}(TM \oplus T^*M)\times C^{\infty}(TM \oplus T^*M) \rightarrow C^{\infty}(TM \oplus T^*M)$$
$${\hat \nabla}_{X+\alpha}(Y+\beta):={\nabla}_X Y+{\nabla}_X \beta,$$
for any $X$, $Y\in C^{\infty}(TM)$ and $\alpha$, $\beta\in C^{\infty}(T^*M)$.

\subsection{Harmonicity of generalized metallic structures}

Define the exterior differential and
codifferential operators for any $TM\oplus T^*M$-valued $p$-form $T\in \Gamma(\Lambda^p (TM\oplus T^*M)^*\otimes (TM\oplus T^*M))$ by
$$(dT)(\sigma_1,\dots,\sigma_{p+1}):=-\sum_{i=1}^{p+1}(-1)^{i}(\hat \nabla_{\sigma_i}T)(\sigma_1,\dots,\widehat{\sigma_i},\dots,\sigma_{p+1})$$
and
$$(\delta T)(\sigma_1,\dots,\sigma_{p-1}):=-\sum_{i=1}^{n}(\hat \nabla_{\xi_i}T)(\xi_i,\sigma_1,\dots,\sigma_{p-1}),$$
for $\{\xi_i\}_{1\leq i\leq n}$ a $\hat{g}$-orthonormal frame field, and the
Hodge-Laplace operator on $\Gamma(\Lambda^p(TM\oplus T^*M)^*\otimes (TM\oplus T^*M))$ by
$$
\Delta:=d\circ \delta+\delta \circ d.
$$

\begin{proposition} \label{p1}
$$(d \hat J) (X+\alpha,Y+\beta)=(dJ)(X,Y)+(\nabla_YJ^*)\alpha-(\nabla_XJ^*)\beta,$$
for any $X$, $Y\in C^{\infty}(TM)$ and $\alpha$, $\beta\in C^{\infty}(T^*M)$.
\end{proposition}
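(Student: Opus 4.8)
The plan is to reduce the generalized computation to the ordinary tangent-bundle one by exploiting the block-matrix form of $\hat J$ together with the fact that $\hat\nabla$ ignores the covector slot of its first argument. Since $\hat J$ is a $1$-form valued in $TM\oplus T^*M$, the defining formula for $d$ with $p=1$ specializes, exactly as in the classical case $(dJ)(X,Y)=(\nabla_X J)Y-(\nabla_Y J)X$ recorded earlier, to
$$(d\hat J)(X+\alpha,Y+\beta)=(\hat\nabla_{X+\alpha}\hat J)(Y+\beta)-(\hat\nabla_{Y+\beta}\hat J)(X+\alpha).$$
Thus the whole proof amounts to evaluating a single term $(\hat\nabla_{X+\alpha}\hat J)(Y+\beta)$ and then antisymmetrizing in the two arguments.

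First I would expand
$$(\hat\nabla_{X+\alpha}\hat J)(Y+\beta)=\hat\nabla_{X+\alpha}\big(\hat J(Y+\beta)\big)-\hat J\big(\hat\nabla_{X+\alpha}(Y+\beta)\big).$$
Reading off the block matrix gives $\hat J(Y+\beta)=JY+\flat_g Y-J^*\beta+p\beta$, while by definition $\hat\nabla_{X+\alpha}(Y+\beta)=\nabla_X Y+\nabla_X\beta$; note already here that the $1$-form $\alpha$ drops out, since $\hat\nabla$ only uses the vector part $X$ of its first entry. I would then split the difference into its $TM$-component and its $T^*M$-component.

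The $TM$-component is immediate: $\nabla_X(JY)-J(\nabla_X Y)=(\nabla_X J)Y$. For the $T^*M$-component the decisive point is that the $\flat_g$ and $p$ contributions cancel. Metric compatibility of the Levi-Civita connection yields $\nabla_X(\flat_g Y)=\flat_g(\nabla_X Y)$, so those terms annihilate each other, and the two copies of $p\nabla_X\beta$ cancel outright; applying the Leibniz rule to $J^*\beta$ leaves precisely $-\nabla_X(J^*\beta)+J^*(\nabla_X\beta)=-(\nabla_X J^*)\beta$. Hence $(\hat\nabla_{X+\alpha}\hat J)(Y+\beta)=(\nabla_X J)Y-(\nabla_X J^*)\beta$.

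Finally I would antisymmetrize. Subtracting the companion expression $(\hat\nabla_{Y+\beta}\hat J)(X+\alpha)=(\nabla_Y J)X-(\nabla_Y J^*)\alpha$ gives
$$(d\hat J)(X+\alpha,Y+\beta)=\big[(\nabla_X J)Y-(\nabla_Y J)X\big]+\big[(\nabla_Y J^*)\alpha-(\nabla_X J^*)\beta\big],$$
and recognizing the first bracket as $(dJ)(X,Y)$ delivers the stated identity. The computation is essentially routine once $\hat\nabla$ is seen to depend only on the tangent part of the differentiating direction; the one step I would verify most carefully is the bookkeeping that forces the $\flat_g$ and $p$ terms to vanish, as this is where metric compatibility of $\nabla$ is genuinely used.
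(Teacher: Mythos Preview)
Your proof is correct and follows essentially the same route as the paper: both expand $(d\hat J)(X+\alpha,Y+\beta)$ via the Leibniz rule for $\hat\nabla_{X+\alpha}\hat J$, use the block form of $\hat J$ together with $\hat\nabla_{X+\alpha}=\nabla_X$, and observe that the $\flat_g$ and $p$ terms cancel by metric compatibility, leaving $(\nabla_X J)Y-(\nabla_X J^*)\beta$ minus its swap. The only difference is presentational---you compute one covariant-derivative term and then antisymmetrize, whereas the paper writes out all sixteen terms at once---but the underlying argument is identical.
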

\begin{proof} A direct computation gives:
$$(d \hat J) (X+\alpha,Y+\beta)=({\hat \nabla}_{X+\alpha}\hat J)(Y+\beta)-({\hat \nabla}_{Y+\beta}\hat J)(X+\alpha)=$$
$$={\hat \nabla}_{X+\alpha}\hat J(Y+\beta)-\hat J({\hat \nabla}_{X+\alpha}(Y+\beta))
-{\hat \nabla}_{Y+\beta}\hat J(X+\alpha)+\hat J({\hat \nabla}_{Y+\beta}(X+\alpha))=$$
$$={\nabla}_XJY+{\nabla}_X\flat_g(Y)-{\nabla}_XJ^*\beta+p{\nabla}_X \beta-J{\nabla}_XY-\flat_g({\nabla}_XY)+J^*({\nabla}_X\beta)-p{\nabla}_X\beta-$$
$$-{\nabla}_YJX-{\nabla}_Y\flat_g(X)+{\nabla}_YJ^*\alpha-p{\nabla}_Y \alpha+J{\nabla}_YX+\flat_g({\nabla}_YX)-J^*({\nabla}_Y\alpha)+p{\nabla}_Y\alpha=$$
$$=({\nabla}_XJ)Y-({\nabla}_YJ)X+({\nabla}_YJ^*)\alpha-({\nabla}_XJ^*)\beta=$$
$$=(dJ)(X,Y)+(\nabla_YJ^*)\alpha-(\nabla_XJ^*)\beta.$$
\end{proof}

\begin{proposition} $d\hat J=0$ if and only if $(M,J,g)$ is locally metallic.
\end{proposition}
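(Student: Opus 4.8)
The plan is to read off everything from Proposition \ref{p1}, whose formula $(d\hat J)(X+\alpha,Y+\beta)=(dJ)(X,Y)+(\nabla_YJ^*)\alpha-(\nabla_XJ^*)\beta$ already isolates the two constituents of $d\hat J$. The term $(dJ)(X,Y)$ is a tangent vector, whereas $(\nabla_YJ^*)\alpha-(\nabla_XJ^*)\beta$ is a $1$-form; since $TM$ and $T^*M$ are complementary subbundles of $TM\oplus T^*M$, these two summands cannot cancel each other, so $d\hat J=0$ is equivalent to the simultaneous vanishing of both. Thus I would first reduce the claim to showing that $d\hat J=0$ holds if and only if $dJ=0$ and $(\nabla_YJ^*)\alpha-(\nabla_XJ^*)\beta=0$ for all $X$, $Y$, $\alpha$, $\beta$.

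Next I would analyse the covector condition. Putting $\beta=0$ leaves $(\nabla_YJ^*)\alpha=0$ for all $Y$ and all $\alpha$, that is $\nabla J^*=0$. The key identity to establish is the duality $\bigl((\nabla_YJ^*)\alpha\bigr)(Z)=\alpha\bigl((\nabla_YJ)Z\bigr)$, obtained by expanding both sides through the definition $J^*\alpha=\alpha\circ J$ and the Leibniz rule for $\nabla$ on $T^*M$. This identity shows that $\nabla J^*=0$ is equivalent to $\nabla J=0$, i.e.\ to $(M,J,g)$ being locally metallic, and in that situation $(dJ)(X,Y)=(\nabla_XJ)Y-(\nabla_YJ)X=0$ is automatic, so the tangential component vanishes as well.

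For the converse I would simply observe that $\nabla J=0$ forces $dJ=0$ directly, and forces $\nabla J^*=0$ via the same duality identity; hence every term on the right-hand side of the formula in Proposition \ref{p1} disappears and $d\hat J=0$.

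I expect the only point requiring genuine care to be the duality identity relating $\nabla J^*$ to $\nabla J$, together with the structural remark that the $TM$-valued and $T^*M$-valued pieces of $d\hat J$ lie in complementary bundles and so must vanish separately. Once these are in place the equivalence follows at once, with the nontrivial implication $d\hat J=0\Rightarrow\nabla J=0$ coming entirely from the covector component of the formula.
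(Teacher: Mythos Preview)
Your proposal is correct and follows essentially the same approach as the paper: both use Proposition~\ref{p1} to reduce $d\hat J=0$ to the pair of conditions $dJ=0$ and $\nabla J^*=0$, then invoke the equivalence $\nabla J^*=0\Leftrightarrow\nabla J=0$ together with the implication $\nabla J=0\Rightarrow dJ=0$. You simply spell out explicitly the two steps the paper leaves implicit, namely the separation of the $TM$- and $T^*M$-components and the duality identity $\bigl((\nabla_YJ^*)\alpha\bigr)(Z)=\alpha\bigl((\nabla_YJ)Z\bigr)$.
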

\begin{proof} From Proposition \ref{p1} it follows that  $d\hat J=0$ if and only if $dJ=0$ and $\nabla J^*=0$. Since $\nabla J^*=0$ if and only if $\nabla J=0$ and moreover, $\nabla J=0$ implies $dJ=0$, we get the statement.
\end{proof}

In order to compute the codifferential operator we need a $\hat g$-orthonormal basis of $TM\oplus T^*M$.

First we recall the following.
Let $(M,J,g)$ be a metallic pseudo-Riemannian manifold such that $J^2=pJ+qI$ with $p^2+4q\neq 0$. Define
$$\tilde J:={-1\over{\sqrt{\vert p^2+4q\vert}}}(2J-pI).$$
A direct computation gives the following:

\begin{lemma} $${\tilde J}^2={p^2+4q\over{{\vert p^2+4q\vert}}}I.$$
In particular, $\tilde J$ is an almost product structure for $p^2+4q>0$ and an almost complex structure for $p^2+4q<0$.
\end{lemma}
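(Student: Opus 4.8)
The plan is to reduce everything to the defining relation $J^2=pJ+qI$ by a single algebraic expansion, so no geometric input is needed. First I would factor the scalar out of the definition, writing
$$
\tilde J^2=\frac{1}{\vert p^2+4q\vert}(2J-pI)^2,
$$
which is legitimate because $p^2+4q\neq 0$ guarantees $\sqrt{\vert p^2+4q\vert}\neq 0$, and the two copies of $\tilde J$ commute since each is a polynomial in $J$.

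Next I would expand the square and substitute the structure equation. Concretely, $(2J-pI)^2=4J^2-4pJ+p^2I$, and replacing $J^2$ by $pJ+qI$ gives
$$
4(pJ+qI)-4pJ+p^2I=4pJ+4qI-4pJ+p^2I=(p^2+4q)I.
$$
The key observation to flag is that the terms linear in $J$ cancel exactly, leaving a scalar multiple of the identity; this cancellation is precisely what makes $2J-pI$ the ``trace-free'' or normalized part of $J$. Dividing by $\vert p^2+4q\vert$ then yields
$$
\tilde J^2=\frac{p^2+4q}{\vert p^2+4q\vert}I,
$$
as claimed.

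For the final assertion I would simply read off the sign. If $p^2+4q>0$ the scalar factor is $+1$, so $\tilde J^2=I$ and $\tilde J$ is an almost product structure; if $p^2+4q<0$ the factor is $-1$, so $\tilde J^2=-I$ and $\tilde J$ is an almost complex structure. There is no real obstacle in this lemma: the entire content is the cancellation of the linear terms in the expansion of $(2J-pI)^2$, and the only hypothesis actually used beyond the structure equation is $p^2+4q\neq 0$, which is needed merely to make the normalizing constant well defined.
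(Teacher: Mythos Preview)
Your proof is correct and is exactly the direct computation the paper has in mind; the paper itself records no details beyond the phrase ``a direct computation gives the following,'' and your expansion of $(2J-pI)^2$ with the substitution $J^2=pJ+qI$ is that computation.
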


We shall consider the case when $g$ is a Riemannian metric, which, in particular, implies that $p^2+4q>0$ (because if $g$ is positive definite, then $\tilde J$ has necessarily real eigenvalues, and ${\tilde J}^2=I$).

Let $n$ be the real dimension of $M$ and let $\{E_i\}_{1\leq i\leq n}$ be a $g$-orthonormal frame field for $TM$. If we
consider $\{\flat_g(E_i)\}_{1\leq i\leq n}$ and ${\sigma}_i:={\tilde J} E_i+\flat_g(E_i)$, we have:
$$\hat g ({\sigma}_i,{\sigma}_j)=g(\tilde J E_i,\tilde J E_j)+g(E_i,E_j)+g(E_i,{\tilde J}^2E_j)+g(E_j,{\tilde J}^2E_i)=4g(E_i,E_j).$$
In particular, defining
$${\xi}_i:= \frac{1}{2}{\sigma}_i,$$
we have that $\{{\xi}_i\}_{1\leq i\leq n}$ is a $\hat g$-orthonormal frame field for $TM\oplus T^*M$.

\begin{proposition}
$$\delta \hat J=\frac{1}{4}[\delta J+\flat_g(\sum_{i=1}^n({\nabla}_{\tilde J{E}_i}J)E_i)],$$
where $\{E_i\}_{1\leq i\leq n}$ is a $g$-orthonormal frame field for $TM$.
\end{proposition}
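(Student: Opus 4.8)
The plan is to evaluate the codifferential directly from its definition, $\delta\hat J = -\sum_{i=1}^n(\hat\nabla_{\xi_i}\hat J)(\xi_i)$, using the $\hat g$-orthonormal frame $\xi_i = \frac12(\tilde J E_i + \flat_g(E_i))$ constructed just above. I would write $\xi_i = X_i + \alpha_i$ with $X_i = \frac12\tilde J E_i$ and $\alpha_i = \frac12\flat_g(E_i)$, and expand $(\hat\nabla_{\xi_i}\hat J)(\xi_i) = \hat\nabla_{\xi_i}(\hat J\xi_i) - \hat J(\hat\nabla_{\xi_i}\xi_i)$ using the block form of $\hat J$ together with the fact that $\hat\nabla_{X+\alpha}(Y+\beta) = \nabla_X Y + \nabla_X\beta$ differentiates only along the tangent part $X_i$.

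The key point of this expansion is that two of the four resulting blocks cancel. The $\flat_g$-block drops out because $\nabla$ is metric, so $\nabla_{X_i}(\flat_g X_i) = \flat_g(\nabla_{X_i}X_i)$; the $pI$-block drops out because $p$ is constant, so $\nabla_{X_i}(p\alpha_i) = p\nabla_{X_i}\alpha_i$. What remains is the clean expression $(\hat\nabla_{\xi_i}\hat J)(\xi_i) = (\nabla_{X_i}J)X_i - (\nabla_{X_i}J^*)\alpha_i$, whose tangent part lives in $TM$ and whose cotangent part lives in $T^*M$. Substituting $X_i$ and $\alpha_i$ extracts a factor $\frac14$ from each term.

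For the tangent part I would use the Lemma above: $\tilde J$ is $g$-symmetric with $\tilde J^2 = I$, so $\{\tilde J E_i\}_{1\le i\le n}$ is again a $g$-orthonormal frame (the matrix $g(\tilde J E_i, E_j)$ is symmetric and squares to the identity, hence orthogonal). Since $\delta J = -\sum_i(\nabla_{F_i}J)F_i$ is the frame-independent metric trace of $\nabla J$, evaluating it on $F_i = \tilde J E_i$ gives $-\sum_i(\nabla_{\tilde J E_i}J)(\tilde J E_i) = \delta J$, so the tangent contribution is $\frac14\delta J$. For the cotangent part I would establish the pointwise identity $(\nabla_Z J^*)(\flat_g E_i) = \flat_g((\nabla_Z J)E_i)$: unwinding $(J^*\alpha)(W) = \alpha(JW)$ and using $\nabla g = 0$ gives $((\nabla_Z J^*)\flat_g E_i)(W) = g(E_i,(\nabla_Z J)W)$, and since $\nabla_Z J$ is $g$-symmetric (differentiate $g(JX,Y)=g(X,JY)$) this equals $g((\nabla_Z J)E_i, W)$, i.e. $(\flat_g((\nabla_Z J)E_i))(W)$. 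Summing with $Z = \tilde J E_i$ identifies the cotangent contribution as $\frac14\flat_g(\sum_i(\nabla_{\tilde J E_i}J)E_i)$, and assembling the two parts yields the stated formula.

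The main obstacle I expect is the careful bookkeeping of the block expansion in the first step — tracking which summands land in $TM$ versus $T^*M$ and verifying the two cancellations — together with the auxiliary identity $(\nabla_Z J^*)(\flat_g E_i) = \flat_g((\nabla_Z J)E_i)$, which is the real engine of the cotangent computation and rests entirely on the $g$-symmetry of the tensor $\nabla_Z J$.
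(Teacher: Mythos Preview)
Your proposal is correct and follows essentially the same route as the paper: compute $\delta\hat J$ directly from the $\hat g$-orthonormal frame $\xi_i=\frac12(\tilde J E_i+\flat_g(E_i))$, expand $(\hat\nabla_{\xi_i}\hat J)\xi_i$ via the block form of $\hat J$, and observe that the $\flat_g$- and $pI$-blocks cancel to leave $(\nabla_{X_i}J)X_i-(\nabla_{X_i}J^*)\alpha_i$. You supply more justification than the paper does---the orthonormality of $\{\tilde J E_i\}$ and the identity $(\nabla_Z J^*)\flat_g E_i=\flat_g((\nabla_Z J)E_i)$ via the $g$-symmetry of $\nabla_Z J$---but these are exactly the implicit ingredients behind the paper's one-line passage to $-\frac14\sum_i[(\nabla_{\tilde J E_i}J)\tilde J E_i-\flat_g((\nabla_{\tilde J E_i}J)E_i)]$.
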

\begin{proof} A direct computation gives:
$$\delta \hat J=-\sum_{i=1}^n({\hat \nabla}_{{\xi}_i}\hat J){\xi}_i=-\sum_{i=1}^n[{\hat \nabla}_{{\xi}_i}\hat J{\xi}_i-\hat J({\hat \nabla}_{{\xi}_i}{\xi}_i)]=$$
$$=-{1\over{4}}\sum_{i=1}^n[{\hat \nabla}_{\tilde J E_i}\hat J(\tilde J{E}_i+\flat_g(E_i))-\hat J({\hat \nabla}_{\tilde J E_i}(\tilde J{E}_i+\flat_g(E_i))]=$$
$$=-{1\over{4}}\sum_{i=1}^n[{\hat \nabla}_{\tilde J E_i}(J\tilde J{E}_i+\flat_g(\tilde J E_i)-J^*(\flat_g(E_i))+p\flat_g(E_i))
-\hat J(\nabla_{\tilde J E_i}\tilde J{E}_i+\nabla_{\tilde J E_i}\flat_g(E_i))]=$$
$$=-{1\over{4}}\sum_{i=1}^n[({\nabla}_{\tilde J E_i}J){\tilde J}{E}_i - \flat_g(({\nabla}_{\tilde J E_i}{ J}){E}_i)]=$$
$$={1\over{4}}[\delta J+\flat_g(\sum_{i=1}^n({\nabla}_{\tilde J E_i}J){E}_i)].$$
\end{proof}

\begin{lemma}
$$4(d\delta\hat J)(X+\alpha)=(d\delta J)(X)+ \flat_g(\sum_{i=1}^n{\nabla}_X(({\nabla}_{\tilde J E_i}{ J}){E}_i)),$$
for any $X\in C^{\infty}(TM)$ and $\alpha\in C^{\infty}(T^*M)$, where $\{E_i\}_{1\leq i\leq n}$ is a $g$-orthonormal frame field for $TM$.
\end{lemma}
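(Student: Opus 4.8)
The plan is to exploit that $\delta\hat J$ is a $0$-form valued in $TM\oplus T^*M$, so that the exterior differential reduces to the generalized connection. Indeed, applying the definition of $d$ to a $0$-form gives $(d\delta\hat J)(X+\alpha)=\hat\nabla_{X+\alpha}(\delta\hat J)$, since the only surviving term is $-(-1)^1(\hat\nabla_{X+\alpha}\,\delta\hat J)$. I would then substitute the formula for $\delta\hat J$ established in the preceding proposition, namely $\delta\hat J=\frac14\delta J+\frac14\flat_g(W)$ with $W:=\sum_{i=1}^n(\nabla_{\tilde J E_i}J)E_i$, and split it into its $TM$-component $\frac14\delta J$ and its $T^*M$-component $\frac14\flat_g(W)$.

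Next I would apply the defining rule $\hat\nabla_{X+\alpha}(Y+\beta)=\nabla_XY+\nabla_X\beta$. The key structural feature is that $\hat\nabla$ ignores the covector slot of its first argument, so the term $\alpha$ drops out entirely and one obtains $(d\delta\hat J)(X+\alpha)=\frac14\nabla_X(\delta J)+\frac14\nabla_X(\flat_g(W))$; this already accounts for the absence of any $\alpha$-dependence on the right-hand side. For the tangent part I would recognize $\nabla_X(\delta J)$ as the exterior differential on $M$ of the $0$-form $\delta J$, that is, $\nabla_X(\delta J)=(d\delta J)(X)$.

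For the cotangent part the one genuine point is that the Levi-Civita connection commutes with the musical isomorphism: since $\nabla g=0$, a short check shows $\nabla_X\circ\flat_g=\flat_g\circ\nabla_X$, whence $\nabla_X(\flat_g(W))=\flat_g(\nabla_X W)$. Finally, by linearity of $\nabla_X$ over the finite sum, $\nabla_X W=\sum_{i=1}^n\nabla_X((\nabla_{\tilde J E_i}J)E_i)$; although the individual summands are not frame-independent, their sum $W$ is, and this interchange is merely additivity of the connection, requiring no geodesic-frame assumption. Collecting these and multiplying through by $4$ yields the claimed identity. I expect no serious obstacle: the statement is essentially the transcription of $\delta\hat J$ through the first-order operator $d=\hat\nabla$ acting on $0$-forms, and the only step deserving explicit attention is the commutation of $\nabla$ with $\flat_g$.
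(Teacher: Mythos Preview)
Your proposal is correct and follows essentially the same route as the paper: reduce $d$ on the $0$-form $\delta\hat J$ to $\hat\nabla_{X+\alpha}$, drop the $\alpha$-dependence by the definition of $\hat\nabla$, and then split into the $TM$- and $T^*M$-components using the previously computed $\delta\hat J$. The paper's proof is terser and leaves implicit the commutation $\nabla_X\circ\flat_g=\flat_g\circ\nabla_X$ that you spell out, but the argument is the same.
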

\begin{proof} We have:
$$4(d\delta\hat J)(X+\alpha)=4{\hat \nabla}_{X+\alpha}\delta \hat J=4{\hat \nabla}_X\delta \hat J=
{\nabla}_{X}\delta J+\flat_g(\sum_{i=1}^n{\nabla}_X(({\nabla}_{\tilde J E_i}{ J}){E}_i))=$$
$$=(d\delta J)(X)+ \flat_g(\sum_{i=1}^n{\nabla}_X(({\nabla}_{\tilde J E_i}{ J}){E}_i)).$$
\end{proof}

\begin{lemma}
$$4(\delta d \hat J)(X+\alpha)=(\delta d J)(X)+ ({\nabla}^2 J^*)(\alpha)+$$
$$+\flat_g(\sum_{i=1}^n[-{\nabla}_{\tilde J E_i}(({\nabla}_X{ J}){E}_i)+({\nabla}_X J)({\nabla}_{\tilde J {E}_i}E_i)+({\nabla}_{{\nabla}_{\tilde J E_i}X} J)E_i]),$$
for any $X\in C^{\infty}(TM)$ and $\alpha\in C^{\infty}(T^*M)$, where ${\nabla}^2 J^*:=\sum_{i=1}^{n}(\nabla_{E_i}\nabla_{E_i}J^*-\nabla_{\nabla_{E_i}E_i}J^*)$ and $\{E_i\}_{1\leq i\leq n}$ is a $g$-orthonormal frame field for $TM$.
\end{lemma}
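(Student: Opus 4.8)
The plan is to compute $\delta d\hat J$ directly from the definition of the codifferential on $TM\oplus T^*M$, using the $\hat g$-orthonormal frame $\{\xi_i\}$ constructed above and splitting every intermediate expression into its $TM$-valued and $T^*M$-valued parts. Writing $\xi_i=\frac12(\tilde J E_i+\flat_g E_i)$ and noting that, since $\hat\nabla_{X+\alpha}(Y+\beta)=\nabla_X Y+\nabla_X\beta$, the operator $\hat\nabla_{\xi_i}$ acts on each slot simply as $\frac12\nabla_{\tilde J E_i}$, I would start from
$$(\delta d\hat J)(X+\alpha)=-\sum_{i=1}^n(\hat\nabla_{\xi_i}d\hat J)(\xi_i,X+\alpha)$$
and expand the covariant derivative of the two-form by the Leibniz rule into the three terms $\hat\nabla_{\xi_i}[(d\hat J)(\xi_i,X+\alpha)]$, $(d\hat J)(\hat\nabla_{\xi_i}\xi_i,X+\alpha)$ and $(d\hat J)(\xi_i,\hat\nabla_{\xi_i}(X+\alpha))$. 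The factor $\frac14=\frac12\cdot\frac12$ arising from the two copies of $\xi_i$ is precisely what produces the $4$ on the left-hand side.

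Next I would evaluate each of these three pieces by substituting Proposition \ref{p1}, which expresses $(d\hat J)(X+\alpha,Y+\beta)$ as the sum of the $TM$-part $(dJ)(X,Y)$ and the $T^*M$-part $(\nabla_Y J^*)\alpha-(\nabla_X J^*)\beta$. The $TM$-valued contributions involve only $dJ$ evaluated on $\tilde J E_i$ and $X$; recombining the three Leibniz terms they reassemble into $-\frac14\sum_i(\nabla_{\tilde J E_i}dJ)(\tilde J E_i,X)$. Here I would use that $\tilde J$ is a $g$-symmetric involution (recall ${\tilde J}^2=I$ since $p^2+4q>0$), so that $\{\tilde J E_i\}$ is again a $g$-orthonormal frame and this sum equals $\frac14(\delta dJ)(X)$, giving the first term on the right-hand side.

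For the $T^*M$-valued contributions I would treat two kinds of terms separately. The terms carrying $\alpha$ produce $-\frac12(\nabla_{\tilde J E_i}J^*)\alpha$ inside $(d\hat J)(\xi_i,X+\alpha)$; after differentiating and adding the correction terms, the first-order pieces $(\nabla_{\tilde J E_i}J^*)(\nabla_{\tilde J E_i}\alpha)$ cancel against the contribution of $(d\hat J)(\xi_i,\hat\nabla_{\xi_i}(X+\alpha))$, leaving $\frac14\sum_i[(\nabla_{\tilde J E_i}\nabla_{\tilde J E_i}J^*)\alpha-(\nabla_{\nabla_{\tilde J E_i}\tilde J E_i}J^*)\alpha]=\frac14(\nabla^2 J^*)(\alpha)$, again by $g$-orthonormality of $\{\tilde J E_i\}$. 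The remaining $T^*M$-valued terms all carry the covector $\flat_g E_i$; here the decisive step is the identity $(\nabla_Z J^*)(\flat_g W)=\flat_g((\nabla_Z J)W)$, which follows from $J^*\circ\flat_g=\flat_g\circ J$ together with $\nabla\flat_g=0$. Applying it converts the three surviving $\flat_g E_i$ terms—coming respectively from differentiating $\frac12(\nabla_X J^*)\flat_g E_i$, from $(d\hat J)(\hat\nabla_{\xi_i}\xi_i,X+\alpha)$, and from $(d\hat J)(\xi_i,\hat\nabla_{\xi_i}(X+\alpha))$—into $\flat_g$ of $-\nabla_{\tilde J E_i}((\nabla_X J)E_i)$, of $(\nabla_X J)(\nabla_{\tilde J E_i}E_i)$ and of $(\nabla_{\nabla_{\tilde J E_i}X}J)E_i$. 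Multiplying through by $4$ then yields the stated formula.

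The main obstacle is the bookkeeping rather than any single idea: one must keep the $TM$- and $T^*M$-valued parts rigorously separated through every Leibniz expansion, track the various factors of $\frac12$ correctly, and in particular verify both the cancellation of the cross-terms $(\nabla_{\tilde J E_i}J^*)(\nabla_{\tilde J E_i}\alpha)$ and the passage from $J^*$ applied to $\flat_g E_i$ back to $\flat_g$ of $(\nabla J)$ applied to $E_i$. Beyond Proposition \ref{p1}, the only inputs needed are the involutivity and $g$-symmetry of $\tilde J$ and the compatibility of $\flat_g$ with $\nabla$.
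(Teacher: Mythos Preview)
Your proposal is correct and follows essentially the same route as the paper: start from the codifferential with respect to the $\hat g$-orthonormal frame $\xi_i=\frac12(\tilde J E_i+\flat_g E_i)$, expand $(\hat\nabla_{\xi_i}d\hat J)(\xi_i,X+\alpha)$ by the Leibniz rule, substitute the expression for $d\hat J$ from Proposition~\ref{p1}, and then separate into $TM$- and $T^*M$-valued parts using that $\{\tilde J E_i\}$ is again $g$-orthonormal and that $(\nabla_Z J^*)\circ\flat_g=\flat_g\circ(\nabla_Z J)$. Your explicit identification of the cancellation of the cross-terms $(\nabla_{\tilde J E_i}J^*)(\nabla_{\tilde J E_i}\alpha)$ is in fact a point the paper's computation leaves somewhat implicit.
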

\begin{proof} We have:
$$4(\delta d\hat J)(X+\alpha)=-\sum_{i=1}^n({\hat \nabla}_{\tilde J E_i} d\hat J)(\tilde J E_i +{\flat}_g({E}_i),X+\alpha)=$$
$$=-\sum_{i=1}^n[{\hat \nabla}_{\tilde J E_i} (d\hat J)(\tilde J E_i +{\flat}_g({E}_i),X+\alpha)-(d\hat J)({\hat \nabla}_{\tilde J E_i}(\tilde J E_i +{\flat}_g({E}_i)),X+\alpha)-$$
$$-(d\hat J)(\tilde J E_i+{\flat}_g(E_i),{\hat \nabla}_{\tilde J E_i}(X+\alpha)]=$$
$$=-\sum_{i=1}^n[{\nabla}_{\tilde J E_i}( (d J)(\tilde J E_i,X))+{\nabla}_{\tilde J E_i}(({\nabla}_X J^*){\flat}_g({E}_i))-{\nabla}_{\tilde J E_i}(({\nabla}_{\tilde J E_i}J^*)\alpha)-$$
$$-(dJ)({\nabla}_{\tilde J E_i} \tilde J E_i,X)+({\nabla}_{{\nabla}_{\tilde J E_i}\tilde J E_i}J^*)\alpha-({\nabla}_X J^*)({\nabla}_{\tilde J E_i}\flat_g(E_i))-$$
$$-(dJ)(\tilde JE_i,{\nabla}_{\tilde J E_i}X)-{\nabla}_{{\nabla}_{\tilde JE_i}X}{\flat}_g(E_i)+({\nabla}_{\tilde J E_i}J^*)\alpha]=$$
$$=(\delta dJ)(X)+\sum_{i=1}^n[({\nabla}_{\tilde J E_i}{\nabla}_{\tilde J E_i}J^*)\alpha-(\nabla_{\nabla_{\tilde JE_i}\tilde JE_i}J^*)\alpha]+$$
$$+\flat_g(\sum_{i=1}^{n}[-{\nabla}_{\tilde J E_i}((\nabla_X J)E_i)+({\nabla}_{\nabla_{\tilde J E_i} X}J)E_i+(\nabla_XJ)(\nabla_{{\tilde J E_i}}E_i)])=$$
$$=(\delta d J)(X)+ ({\nabla}^2 J^*)(\alpha)+$$
$$+\flat_g(\sum_{i=1}^n[-{\nabla}_{\tilde J E_i}(({\nabla}_X{ J}){E}_i)+({\nabla}_X J)({\nabla}_{\tilde J {E}_i}E_i)+({\nabla}_{{\nabla}_{\tilde J E_i}X} J)E_i]).$$
\end{proof}

Defining
$$(\Delta J^*)(\alpha):={\flat}_g((\Delta J)({\sharp}_g\alpha)),$$
we have the following Weitzenb\"{o}ck formula for $J^*$:

\begin{lemma}
$$(\Delta J^*)(\alpha)=-({\nabla}^2 J^*)(\alpha)-\flat_g(\sum_{i=1}^{n}(R(E_i,{\sharp}_ g \alpha)J)E_i),$$
for any $\alpha\in C^{\infty}(T^*M)$, where $\{E_i\}_{1\leq i\leq n}$ is a $g$-orthonormal frame field for $TM$.
\end{lemma}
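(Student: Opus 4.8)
The plan is to reduce the statement to the Weitzenb\"{o}ck formula $\Delta T=-\nabla^2T-S$ recorded in the Bochner subsection, applied to the tangent bundle-valued $1$-form $T=J$, and then to transport everything through the musical isomorphisms. Starting from the defining relation $(\Delta J^*)(\alpha)=\flat_g((\Delta J)(\sharp_g\alpha))$, I would substitute $\Delta J=-\nabla^2 J-S$, where $SX=\sum_{i=1}^n(R(E_i,X)J)E_i$, and evaluate at $X=\sharp_g\alpha$. Since $\flat_g$ is linear, this immediately yields
$$(\Delta J^*)(\alpha)=-\flat_g((\nabla^2J)(\sharp_g\alpha))-\flat_g\Big(\sum_{i=1}^n(R(E_i,\sharp_g\alpha)J)E_i\Big).$$
The curvature term already matches the right-hand side of the claim, so the whole proof comes down to identifying the first term with $(\nabla^2 J^*)(\alpha)$.

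To do this I would first establish the first-order identity $(\nabla_X J^*)\alpha=\flat_g((\nabla_X J)(\sharp_g\alpha))$ for every $X$ and $\alpha$. This follows from the fact that $J$ is $g$-symmetric, so that $J^*\alpha=\flat_g(J(\sharp_g\alpha))$, together with the compatibility $\nabla g=0$, which makes the musical isomorphisms parallel, i.e. $\nabla_X\flat_g=\flat_g\nabla_X$ and $\nabla_X\sharp_g=\sharp_g\nabla_X$. Expanding $(\nabla_X J^*)\alpha=\nabla_X(J^*\alpha)-J^*(\nabla_X\alpha)$ and inserting $J^*=\flat_g\circ J\circ\sharp_g$, the two terms containing $J\sharp_g(\nabla_X\alpha)$ cancel, leaving precisely $\flat_g((\nabla_X J)(\sharp_g\alpha))$.

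Iterating this observation gives the desired second-order identity. Because $\flat_g$ and $\sharp_g$ commute with each $\nabla_{E_i}$ and with $\nabla_{\nabla_{E_i}E_i}$, the operator $\nabla^2=\sum_i(\nabla_{E_i}\nabla_{E_i}-\nabla_{\nabla_{E_i}E_i})$ passes through the musical isomorphisms unchanged, so that $(\nabla^2 J^*)(\alpha)=\flat_g((\nabla^2 J)(\sharp_g\alpha))$. Substituting this into the displayed formula above completes the argument.

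I expect the main technical point to be the second-order identity $(\nabla^2 J^*)(\alpha)=\flat_g((\nabla^2 J)(\sharp_g\alpha))$: one must be careful that the correction terms $-\nabla_{\nabla_{E_i}E_i}$ in the definition of $\nabla^2$ are handled consistently and that the parallelism of the musical isomorphisms is genuinely used at each order of differentiation, rather than assumed only at the first. Everything else is the formal substitution into the already-proven Weitzenb\"{o}ck formula.
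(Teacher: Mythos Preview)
Your proposal is correct and follows exactly the approach the paper has in mind: the paper's proof is the single sentence ``It follows immediately from the Weitzenb\"{o}ck formula for $J$,'' and your argument is precisely the unpacking of that sentence via the definition $(\Delta J^*)(\alpha)=\flat_g((\Delta J)(\sharp_g\alpha))$ together with the parallelism of the musical isomorphisms. Your verification that $(\nabla^2 J^*)(\alpha)=\flat_g((\nabla^2 J)(\sharp_g\alpha))$ is the only nontrivial step, and you handle it correctly.
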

\begin{proof}
It follows immediately from the Weitzenb\"{o}ck formula for $J$ \cite{xi}.
\end{proof}

Then the expression of the Hodge-Laplace operator on $\Gamma(\Lambda^p(TM\oplus T^*M)^*\otimes (TM\oplus T^*M))$ computed on $\hat{J}$ is given by:

\begin{proposition}
Let $(M,J,g)$ be an $n$-dimensional metallic Riemannian manifold such that $J^2=pJ+qI$ with $p^2+4q>0$. Then for any $X\in C^{\infty}(TM)$ and $\alpha\in C^{\infty}(T^*M)$:
$$4(\Delta \hat J)(X+\alpha)=(\Delta J) (X)-(\Delta J^*)(\alpha)+$$
$$+\flat_g(\sum_{i=1}^{n}[(R(X,\tilde J E_i)J)E_i-(R(E_i,{\sharp}_ g \alpha)J)E_i+({\nabla}_{\tilde J E_i}J)({\nabla}_X{E_i})]),$$
where $\{E_i\}_{1\leq i\leq n}$ is a $g$-orthonormal frame field for $TM$.
\end{proposition}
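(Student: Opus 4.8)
The plan is to use the defining relation $\Delta=d\circ\delta+\delta\circ d$ and to assemble the two preparatory lemmas that already compute $4(d\delta\hat J)(X+\alpha)$ and $4(\delta d\hat J)(X+\alpha)$. Adding them gives
$$4(\Delta\hat J)(X+\alpha)=4(d\delta\hat J)(X+\alpha)+4(\delta d\hat J)(X+\alpha),$$
so the whole argument reduces to reorganizing the right-hand side into the three announced blocks: a pure $TM$-part, a pure $T^*M$-part, and a curvature remainder inside $\flat_g(\cdots)$. I would carry the factor $4$ throughout, so no rescaling is needed at the end.

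First I would collect the tangential scalars. The term $(d\delta J)(X)$ from the $d\delta$ lemma and $(\delta dJ)(X)$ from the $\delta d$ lemma combine immediately into $(\Delta J)(X)$, by definition of $\Delta J$. This yields the first summand of the claimed identity with no further work. Next I would dispose of the cotangent contribution: the $\delta d$ lemma supplies the term $({\nabla}^2 J^*)(\alpha)$, and here I invoke the Weitzenb\"{o}ck formula for $J^*$ proved just above, namely $(\Delta J^*)(\alpha)=-({\nabla}^2 J^*)(\alpha)-\flat_g(\sum_i(R(E_i,\sharp_g\alpha)J)E_i)$. Solving for $({\nabla}^2 J^*)(\alpha)$ converts this block into $-(\Delta J^*)(\alpha)$ together with the curvature term $-\flat_g(\sum_i(R(E_i,\sharp_g\alpha)J)E_i)$, which is exactly the $\sharp_g\alpha$-curvature piece appearing in the statement.

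The last and most delicate step is to simplify the leftover $\flat_g$-terms coming from the two lemmas, namely
$$\flat_g\Big(\sum_i\big[\nabla_X((\nabla_{\tilde J E_i}J)E_i)-\nabla_{\tilde J E_i}((\nabla_X J)E_i)+(\nabla_X J)(\nabla_{\tilde J E_i}E_i)+(\nabla_{\nabla_{\tilde J E_i}X}J)E_i\big]\Big).$$
Here I would expand each covariant derivative of a vector of the form $(\nabla_U J)V$ by the Leibniz rule, writing $\nabla_W((\nabla_U J)V)=(\nabla^2_{W,U}J)V+(\nabla_{\nabla_W U}J)V+(\nabla_U J)(\nabla_W V)$ in terms of the second covariant derivative of $J$. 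The crucial input is the Ricci identity $(R(X,\tilde J E_i)J)E_i=(\nabla^2_{X,\tilde J E_i}J-\nabla^2_{\tilde J E_i,X}J)E_i$, which manufactures the curvature term $\flat_g(\sum_i(R(X,\tilde J E_i)J)E_i)$. After this identification the first-order connection contributions $(\nabla_{\nabla_{\tilde J E_i}X}J)E_i$ and $(\nabla_X J)(\nabla_{\tilde J E_i}E_i)$ should cancel against their counterparts, leaving the single surviving summand $\flat_g(\sum_i(\nabla_{\tilde J E_i}J)(\nabla_X E_i))$ predicted by the statement.

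The main obstacle is precisely this bookkeeping in the final step. One must expand every second covariant derivative consistently, and be especially careful that $\tilde J E_i$ is a genuine vector field, so that Leibniz produces additional $\nabla_{\nabla_W(\tilde J E_i)}J$ contributions; the heart of the matter is to check that all first-order connection terms either cancel in pairs or are absorbed into the lone $(\nabla_{\tilde J E_i}J)(\nabla_X E_i)$ term, while the curvature pieces assemble exactly as claimed. Everything else—the appearance of $(\Delta J)(X)$ and of $-(\Delta J^*)(\alpha)$, and the two curvature terms $\flat_g(\sum_i(R(X,\tilde J E_i)J)E_i)$ and $-\flat_g(\sum_i(R(E_i,\sharp_g\alpha)J)E_i)$—follows by direct substitution of the earlier lemmas and the Weitzenb\"{o}ck formula.
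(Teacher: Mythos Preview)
Your outline matches the paper's proof: add the two preparatory lemmas, identify $(\Delta J)(X)=(d\delta J)(X)+(\delta dJ)(X)$, and convert $(\nabla^2 J^*)(\alpha)$ via the Weitzenb\"ock formula for $J^*$. The gap is in the last step. After expanding with Leibniz and applying the Ricci identity exactly as you describe, the $\flat_g$-block becomes
\[
\flat_g\Big(\sum_i\big[(R(X,\tilde J E_i)J)E_i+(\nabla_{\nabla_X(\tilde J E_i)}J)E_i+(\nabla_{\tilde J E_i}J)(\nabla_X E_i)\big]\Big),
\]
and the middle summand $\sum_i(\nabla_{\nabla_X(\tilde J E_i)}J)E_i$ does \emph{not} cancel against anything in a general orthonormal frame: the first-order pieces coming from the $\delta d$ lemma (namely $-(\nabla_{\nabla_{\tilde J E_i}X}J)E_i-(\nabla_X J)(\nabla_{\tilde J E_i}E_i)+(\nabla_X J)(\nabla_{\tilde J E_i}E_i)+(\nabla_{\nabla_{\tilde J E_i}X}J)E_i$) already cancel among themselves. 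Your hope that it is ``absorbed into the lone $(\nabla_{\tilde J E_i}J)(\nabla_X E_i)$ term'' fails; the two are independent contributions (together they form a frame-independent quantity, but neither vanishes separately).

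The paper removes this obstruction by a device you do not mention: it works at a fixed point $x$, chooses a normal orthonormal frame $\{F_i\}$ with $(\nabla_{F_i}F_j)(x)=0$, and then sets $E_i:=\tilde J F_i$, so that $\tilde J E_i=F_i$ and hence $\nabla_X(\tilde J E_i)(x)=0$ for every $X$. With that adapted frame the extra term $(\nabla_{\nabla_X(\tilde J E_i)}J)E_i$ vanishes at $x$ and the stated formula follows. Without this normalization your bookkeeping cannot close; insert this choice of frame at the start of the final step.
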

\begin{proof} Let us consider a $g$-orthonormal frame field $\{F_i\}_{1\leq i\leq n}$ in a neighborhood of a point $x\in M$ such that $({{\nabla}_{F_i} F_j})(x)=0$. Remark that $E_i:=\tilde J F_i$ defines a $g$-orthonormal frame field and $\tilde J E_i={\tilde J}^2 F_i=F_i$. Then $({\nabla}_{\tilde J E_i} {\tilde J }E_j)(x)=0$.

From the previous computations we get:
$$4(\Delta \hat J)(X+\alpha)=(d\delta J)(X)+ \flat_g(\sum_{i=1}^n{\nabla}_X(({\nabla}_{\tilde J E_i}{ J}){E}_i)+(\delta d J)(X)+ ({\nabla}^2 J^*)(\alpha)+$$
$$+\flat_g(\sum_{i=1}^n[-{\nabla}_{\tilde J E_i}(({\nabla}_X{ J}){E}_i)+({\nabla}_X J)({\nabla}_{\tilde J {E}_i}E_i)+({\nabla}_{{\nabla}_{\tilde J E_i}X} J)E_i])=$$
$$=(\Delta J) (X)-(\Delta J^*)(\alpha)+$$
$$+\flat_g(\sum_{i=1}^{n}[(R(X,\tilde J E_i)J)E_i-(R(E_i,{\sharp}_ g \alpha)J)E_i+({\nabla}_{\tilde J E_i}J)({\nabla}_X{E_i})]).$$
\end{proof}

\begin{corollary}
$\hat J$ is harmonic if and only if the following conditions hold:
\begin{enumerate}
  \item $J$ is harmonic;
  \item $\sum_{i=1}^{n} (R(E_i,X)J)E_i=0$;
  \item $\sum_{i=1}^{n} [(R(X,\tilde J E_i)J)E_i+({\nabla}_{\tilde J E_i}J)({\nabla}_X{E_i})]=0$,
\end{enumerate}  
for any $X \in C^{\infty}(TM)$, where $\{E_i\}_{1\leq i\leq n}$ is a $g$-orthonormal frame field for $TM$.
\end{corollary}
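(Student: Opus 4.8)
The plan is to read off the equivalence directly from the formula for $4(\Delta\hat J)(X+\alpha)$ established in the preceding Proposition, by separating its $TM$-component from its $T^*M$-component and then exploiting the independence of $X$ and $\alpha$. First I would observe that, in that formula, the single term $(\Delta J)(X)$ is the only summand lying in $TM$, whereas $-(\Delta J^*)(\alpha)$ and the entire $\flat_g(\cdots)$ expression lie in $T^*M$; this is forced by the definition $(\Delta J^*)(\alpha):=\flat_g((\Delta J)(\sharp_g\alpha))$ and by the outer $\flat_g$. Hence $\Delta\hat J=0$ is equivalent to the simultaneous vanishing, for all $X\in C^{\infty}(TM)$ and all $\alpha\in C^{\infty}(T^*M)$, of the $TM$-part and of the $T^*M$-part.

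The $TM$-part vanishes precisely when $(\Delta J)(X)=0$ for every $X$, i.e. when $\Delta J=0$; this is exactly condition (1), so I would extract (1) from the $TM$-component. Assuming (1) holds, the definition $(\Delta J^*)(\alpha)=\flat_g((\Delta J)(\sharp_g\alpha))$ immediately gives $(\Delta J^*)(\alpha)=0$, so that term drops out of the $T^*M$-part. Using that $\flat_g$ is an isomorphism, what remains of the $T^*M$-part is the requirement that
$$\sum_{i=1}^{n}\big[(R(X,\tilde J E_i)J)E_i-(R(E_i,\sharp_g\alpha)J)E_i+(\nabla_{\tilde J E_i}J)(\nabla_X E_i)\big]=0$$
for all $X$ and $\alpha$.

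The key step is then a separation-of-variables argument: $X$ and $\alpha$ are independent, and the middle summand $\sum_i(R(E_i,\sharp_g\alpha)J)E_i$ depends only on $\alpha$, while the other two depend only on $X$. Setting $X=0$ and letting $Y:=\sharp_g\alpha$ range over all of $TM$ (since $\sharp_g$ is surjective) yields $\sum_i(R(E_i,Y)J)E_i=0$, which is condition (2); setting $\alpha=0$ yields $\sum_i[(R(X,\tilde J E_i)J)E_i+(\nabla_{\tilde J E_i}J)(\nabla_X E_i)]=0$, which is condition (3). For the converse I would simply substitute (1), (2), (3) back into the formula: (1) annihilates both $(\Delta J)(X)$ and $(\Delta J^*)(\alpha)$, while (2) and (3) annihilate the two halves of the residual sum, giving $\Delta\hat J=0$.

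I expect the only delicate point to be the clean splitting of the residual identity into an $X$-part and an $\alpha$-part and the justification that each must vanish separately; once the $TM$/$T^*M$ decomposition is in place, everything else is bookkeeping. It is worth recording that condition (2) is written with $X$ in the second curvature slot, which matches $\sum_i(R(E_i,Y)J)E_i$ after the substitution $Y=\sharp_g\alpha$, so no sign or index-ordering discrepancy arises.
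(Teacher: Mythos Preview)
Your proposal is correct and is precisely the argument the paper leaves implicit: the corollary is stated in the paper without proof, as an immediate consequence of the preceding formula for $4(\Delta\hat J)(X+\alpha)$, and your $TM$/$T^*M$ splitting followed by the separation of the $X$-dependent and $\alpha$-dependent terms is the intended reading.
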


\section{Harmonic maps between generalized tangent bundles and harmonic generalized metallic structures}

Let $(M,J,g)$ and $(\bar{M},\bar{g},\bar{J})$ be two $n$-dimensional metallic Riemannian manifolds. Denote by $\nabla$ and respectively, $\bar{\nabla}$ the Levi-Civita connections associated to $g$ and respectively, $\bar{g}$.

Consider $\Phi:(M,J,g)\rightarrow (\bar{M},\bar{J},\bar{g})$ a diffeomorphism and define:
$$\hat{\Phi}: TM\oplus T^*M \rightarrow T\bar{M}\oplus T^*\bar{M}, \ \ \hat{\Phi}(X+\alpha):=\Phi_*X+(\Phi^*)^{-1}\alpha,$$
for any $X\in C^{\infty}(TM)$ and $\alpha\in C^{\infty}(T^*M)$.

\begin{remark}
If $\Phi:(M,J,g)\rightarrow (\bar{M},\bar{J},\bar{g})$ is a metallic map, then
$$\hat{\bar{J}}\circ\hat{\Phi}=\hat{\Phi}\circ \hat{J},$$
where $(\hat J, \hat g)$ and $(\hat{\bar{J}}, \hat{\bar{g}})$ are the generalized metallic Riemannian structures induced by the Riemannian structures $(J,g)$ and $(\bar{J},\bar{g})$ \cite{bn}.
\end{remark}

The tension field of $\hat{\Phi}$ is defined by
$$\tau(\hat{\Phi}):=\sum_{i=1}^{n}[\hat{\bar{\nabla}}_{\hat{\Phi}_*\xi_i}\hat{\Phi}_*\xi_i-\hat{\Phi}_*(\hat{\nabla}_{\xi_i}\xi_i)],$$
where $\{\xi_i\}_{1\leq i\leq n}$ is a $\hat{g}$-orthonormal frame field
on $TM\oplus T^*M$.

\begin{proposition}\label{p999}
Let $\Phi:(M,J,g)\rightarrow (\bar{M},\bar{J},\bar{g})$ be an isometry. Then $\hat{\Phi}$ is a harmonic map if and only if $\Phi$ is a harmonic map and
$$\sum_{i=1}^{n}[\bar{\nabla}_{\Phi_*(JE_i)}(\Phi_*E_i)-\Phi_*(\nabla_{JE_i}E_i)]=0,$$
for $\{E_i\}_{1\leq i\leq n}$ a $g$-orthonormal frame field
on $TM$.
\end{proposition}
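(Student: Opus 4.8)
The plan is to substitute the explicit frame $\xi_i=\tfrac12(\tilde J E_i+\flat_g(E_i))$ into $\tau(\hat\Phi)$ and to split the resulting section of $T\bar M\oplus T^*\bar M$ into its $T\bar M$-- and $T^*\bar M$--components, each of which turns out to be controlled by one of the two conditions in the statement. Since $\Phi$ is an isometry, $(\Phi^*)^{-1}\circ\flat_g=\flat_{\bar g}\circ\Phi_*$, so $\hat\Phi_*\xi_i=\tfrac12[\Phi_*(\tilde J E_i)+\flat_{\bar g}(\Phi_* E_i)]$. I would abbreviate the second fundamental form of $\Phi$ by $B(X,Y):=\bar\nabla_{\Phi_* X}\Phi_* Y-\Phi_*(\nabla_X Y)$, which is symmetric and $C^\infty(M)$--bilinear. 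Because $\hat{\bar\nabla}$ differentiates only through the tangent part of its direction slot and acts by $\bar\nabla$ on each component, and because $\flat_{\bar g}$ commutes with $\bar\nabla$, a direct expansion of $\hat{\bar\nabla}_{\hat\Phi_*\xi_i}\hat\Phi_*\xi_i-\hat\Phi_*(\hat\nabla_{\xi_i}\xi_i)$ yields a $T\bar M$--part $\tfrac14 B(\tilde J E_i,\tilde J E_i)$ and a $T^*\bar M$--part $\tfrac14\flat_{\bar g}\big(B(\tilde J E_i,E_i)\big)$.

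For the tangent component I would sum over $i$ and invoke the preceding lemma: when $p^2+4q>0$ we have $\tilde J^2=I$, and since $\tilde J$ is $g$--symmetric it is $g$--orthogonal, so $\{\tilde J E_i\}_{1\le i\le n}$ is again a $g$--orthonormal frame. As the metric trace of the symmetric tensor $B$ is independent of the chosen orthonormal frame, $\sum_{i=1}^n B(\tilde J E_i,\tilde J E_i)=\sum_{i=1}^n B(E_i,E_i)=\tau(\Phi)$. Hence the $T\bar M$--component of $\tau(\hat\Phi)$ equals $\tfrac14\tau(\Phi)$ and vanishes exactly when $\Phi$ is a harmonic map.

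For the cotangent component I would substitute $\tilde J=\tfrac{-1}{\sqrt{p^2+4q}}(2J-pI)$ and use the bilinearity of $B$, getting $\sum_{i=1}^n B(\tilde J E_i,E_i)=\tfrac{-1}{\sqrt{p^2+4q}}\big(2\sum_{i=1}^n B(JE_i,E_i)-p\,\tau(\Phi)\big)$, where $\sum_{i=1}^n B(JE_i,E_i)=\sum_{i=1}^n[\bar\nabla_{\Phi_*(JE_i)}\Phi_* E_i-\Phi_*(\nabla_{JE_i}E_i)]$ is precisely the sum appearing in the statement. Thus the $T^*\bar M$--component of $\tau(\hat\Phi)$ is $\tfrac{-1}{4\sqrt{p^2+4q}}\flat_{\bar g}\big(2\sum_i B(JE_i,E_i)-p\,\tau(\Phi)\big)$.

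Finally, $\tau(\hat\Phi)=0$ is equivalent to the vanishing of both components. The tangent component forces $\tau(\Phi)=0$, i.e. $\Phi$ is harmonic; substituting this into the cotangent component and using $\sqrt{p^2+4q}\ne0$ leaves $\sum_i B(JE_i,E_i)=0$, which is the second asserted condition, while conversely these two conditions make both components vanish. I expect the main obstacle to be the component bookkeeping in the first step: one must separate scrupulously the terms that $\hat{\bar\nabla}$ sends into $T\bar M$ from those it sends into $T^*\bar M$ (the latter coming from $\bar\nabla_{\Phi_*(\tilde J E_i)}\flat_{\bar g}(\Phi_* E_i)$ together with the push-forward of $\nabla_{\tilde J E_i}\flat_g(E_i)$), and then arrange the $\tilde J$--expansion so that the spurious $\tau(\Phi)$--contribution in the cotangent slot decouples, leaving only the genuine cross-term $\sum_i B(JE_i,E_i)$.
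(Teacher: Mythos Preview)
Your proposal is correct and follows essentially the same route as the paper: both insert the explicit $\hat g$-orthonormal frame $\xi_i=\tfrac12(\tilde J E_i+\flat_g(E_i))$ into $\tau(\hat\Phi)$, split into the $T\bar M$- and $T^*\bar M$-components, recognise the tangent part as $\tfrac14\tau(\Phi)$ (via the orthonormality of $\{\tilde J E_i\}$), and expand the cotangent part using $\tilde J=\tfrac{-1}{\sqrt{p^2+4q}}(2J-pI)$ to isolate the cross-term $\sum_i B(JE_i,E_i)$. Your use of the second fundamental form notation $B(X,Y)$ is a mild presentational improvement over the paper's bare computation, but the argument is the same.
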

\begin{proof}
Let $\xi_i:=\frac{1}{2}\tilde{J}E_i+\frac{1}{2}\flat_g(E_i)$ be a $\hat{g}$-orthonormal frame field
on $TM\oplus T^*M$ for $\tilde{J}:={-1\over{\sqrt{p^2+4q}}}(2J-pI)$ and $\{E_i\}_{1\leq i\leq n}$ a $g$-orthonormal frame field
on $TM$.
A direct computation gives:
$$\tau(\hat{\Phi})=\frac{1}{4}\sum_{i=1}^{n}[\bar{\nabla}_{\Phi_*(\tilde{J}E_i)}(\Phi_*(\tilde{J}E_i))-\Phi_*(\nabla_{\tilde{J}E_i}\tilde{J}E_i)+$$$$+
\bar{\nabla}_{\Phi_*(\tilde{J}E_i)}(\Phi^*)^{-1}(\flat_g(E_i))-(\Phi^*)^{-1}(\nabla_{\tilde{J}E_i}\flat_g(E_i))]:=$$$$
:=\frac{1}{4}\tau(\Phi)+\frac{1}{4}\sum_{i=1}^{n}
[\bar{\nabla}_{\Phi_*(\tilde{J}E_i)}\flat_{\bar{g}}(\Phi_*E_i)-(\Phi^*)^{-1}\flat_g(\nabla_{\tilde{J}E_i}E_i)]=$$$$
=\frac{1}{4}\tau(\Phi)+\frac{1}{4}\sum_{i=1}^{n}
[\flat_{\bar{g}}(\bar{\nabla}_{\Phi_*(\tilde{J}E_i)}(\Phi_*E_i))-\flat_{\bar{g}}(\Phi_*(\nabla_{\tilde{J}E_i}E_i))]=
$$$$=\frac{1}{4}\tau(\Phi)+\frac{1}{4}\flat_{\bar{g}}(\sum_{i=1}^{n}[\bar{\nabla}_{\Phi_*(\tilde{J}E_i)}(\Phi_*E_i)-\Phi_*(\nabla_{\tilde{J}E_i}E_i)])=$$$$
=\frac{1}{4}\tau(\Phi)+\frac{p}{4\sqrt{p^2+4q}}\flat_{\bar{g}}(\tau(\Phi))-
\frac{1}{2\sqrt{p^2+4q}}\flat_{\bar{g}}(\sum_{i=1}^{n}[\bar{\nabla}_{\Phi_*(JE_i)}(\Phi_*E_i)-\Phi_*(\nabla_{JE_i}E_i)]).$$
\end{proof}

\begin{proposition}
Let $\Phi:(M,J,g)\rightarrow (\bar{M},\bar{J},\bar{g})$ be a metallic isometry and let $(\hat J, \hat g)$ and $(\hat{\bar{J}}, \hat{\bar{g}})$ be the generalized metallic Riemannian structures induced by the non trivial metallic Riemannian structures $(J,g)$ and $(\bar{J},\bar{g})$ \cite{bn}. Then
$$\hat{\bar{J}}(\tau(\hat{\Phi}))=-\frac{1}{4}[\Phi_*(\delta J)-\delta \bar{J}]+\frac{1}{4}\sum_{i=1}^{n}[\bar{\nabla}_{\Phi_*E_i}\Phi_*(JE_i)-\Phi_*(\nabla_{E_i}JE_i)]+$$$$+
\frac{p^2+\sqrt{p^2+4q}}{4\sqrt{p^2+4q}}\flat_{\bar{g}}(\tau(\Phi))+\frac{p}{4\sqrt{p^2+4q}}\flat_{\bar{g}}(\Phi_*(\delta J)-\delta \bar{J})
-$$$$-\frac{p}{4\sqrt{p^2+4q}}\flat_{\bar{g}}(\sum_{i=1}^{n}[\bar{\nabla}_{\Phi_*E_i}\Phi_*(JE_i)-\Phi_*(\nabla_{E_i}JE_i)])
-$$$$-\frac{p}{2\sqrt{p^2+4q}}\flat_{\bar{g}}(\sum_{i=1}^{n}[\bar{\nabla}_{\Phi_*(JE_i)}\Phi_*E_i-\Phi_*(\nabla_{JE_i}E_i)])
+$$$$+\frac{1}{2\sqrt{p^2+4q}}\flat_{\bar{g}}(\bar{J}(\sum_{i=1}^{n}[\bar{\nabla}_{\Phi_*(JE_i)}\Phi_*E_i-\Phi_*(\nabla_{JE_i}E_i)])),
$$
for $\{E_i\}_{1\leq i\leq n}$ a $g$-orthonormal frame field
on $TM$.
\end{proposition}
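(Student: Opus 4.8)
The plan is to treat this proposition as a purely algebraic consequence of Proposition~\ref{p999}: once $\tau(\hat{\Phi})$ is known as an explicit element of $T\bar{M}\oplus T^*\bar{M}$, computing $\hat{\bar{J}}(\tau(\hat{\Phi}))$ reduces to feeding a decomposed tangent-plus-cotangent vector through the block form of $\hat{\bar{J}}$ and then rewriting the resulting tangent contribution with Proposition~\ref{p}. No new differential-geometric input is required beyond those two results and the $\bar{g}$-symmetry of $\bar{J}$.

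First I would record, from Proposition~\ref{p999}, that
$$\tau(\hat{\Phi})=\frac{1}{4}\tau(\Phi)+\frac{p}{4\sqrt{p^2+4q}}\flat_{\bar{g}}(\tau(\Phi))-\frac{1}{2\sqrt{p^2+4q}}\flat_{\bar{g}}(B),$$
where I abbreviate $B:=\sum_{i=1}^{n}[\bar{\nabla}_{\Phi_*(JE_i)}\Phi_*E_i-\Phi_*(\nabla_{JE_i}E_i)]$; since a nontrivial metallic isometry forces $\bar{p}=p$ and $\bar{q}=q$, one has $\sqrt{\bar{p}^2+4\bar{q}}=\sqrt{p^2+4q}$ and the block form of $\hat{\bar{J}}$ carries the scalar $p$. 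The tangent part of $\tau(\hat{\Phi})$ is then $Y:=\frac{1}{4}\tau(\Phi)$ and the cotangent part is $\beta:=\frac{p}{4\sqrt{p^2+4q}}\flat_{\bar{g}}(\tau(\Phi))-\frac{1}{2\sqrt{p^2+4q}}\flat_{\bar{g}}(B)$.

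Next I would apply $\hat{\bar{J}}$ through its block expression $\hat{\bar{J}}(Y+\beta)=\bar{J}Y+\flat_{\bar{g}}(Y)-\bar{J}^*\beta+p\beta$. The single structural identity I need is that $\bar{J}$ is $\bar{g}$-symmetric, so $\bar{J}^*\circ\flat_{\bar{g}}=\flat_{\bar{g}}\circ\bar{J}$, whence $(pI-\bar{J}^*)\flat_{\bar{g}}(Z)=\flat_{\bar{g}}((pI-\bar{J})Z)$ for every $Z$; applying this with $Z=\tau(\Phi)$ and $Z=B$ converts the entire cotangent contribution of $\hat{\bar{J}}$ into $\flat_{\bar{g}}$ of tangent vectors. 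The tangent output $\bar{J}Y=\frac{1}{4}\bar{J}(\tau(\Phi))$ is then rewritten by Proposition~\ref{p} as $-\frac{1}{4}[\Phi_*(\delta J)-\delta\bar{J}]+\frac{1}{4}C$, with $C:=\sum_{i=1}^{n}[\bar{\nabla}_{\Phi_*E_i}\Phi_*(JE_i)-\Phi_*(\nabla_{E_i}JE_i)]$, which yields the first two terms of the claimed formula. The term $\flat_{\bar{g}}(Y)$ combines with the $\frac{p^2}{4\sqrt{p^2+4q}}\flat_{\bar{g}}(\tau(\Phi))$ coming from $(pI-\bar{J}^*)$ acting on the first half of $\beta$ to give the coefficient $\frac{p^2+\sqrt{p^2+4q}}{4\sqrt{p^2+4q}}$; the $\bar{J}^*$-part of that same half, after invoking Proposition~\ref{p} a second time on $\bar{J}(\tau(\Phi))$, produces the two $\delta$-terms together with $-\frac{p}{4\sqrt{p^2+4q}}\flat_{\bar{g}}(C)$; finally the second half of $\beta$ contributes $-\frac{p}{2\sqrt{p^2+4q}}\flat_{\bar{g}}(B)+\frac{1}{2\sqrt{p^2+4q}}\flat_{\bar{g}}(\bar{J}B)$, completing the last two lines.

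The main obstacle is bookkeeping rather than conceptual. One must carefully distinguish the two swapped-index sums $B$ and $C$ (they differ by interchanging $E_i$ and $JE_i$ in the covariant derivatives), keep the musical isomorphism $\flat_{\bar{g}}$ and the $\bar{g}$-symmetry of $\bar{J}$ straight when pulling $\bar{J}$ through $\flat_{\bar{g}}$, and verify that the scalar coefficients assemble exactly into $\frac{p^2+\sqrt{p^2+4q}}{4\sqrt{p^2+4q}}$. Once these are tracked, the remainder is routine linear-algebraic substitution.
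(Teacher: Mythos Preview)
Your proposal is correct and follows precisely the route indicated by the paper's own proof, which simply reads ``It follows from Propositions~\ref{p} and~\ref{p999}.'' You have faithfully spelled out what that one-line proof entails: take the decomposition of $\tau(\hat{\Phi})$ from Proposition~\ref{p999}, push it through the block form of $\hat{\bar{J}}$ using the $\bar{g}$-symmetry identity $\bar{J}^*\circ\flat_{\bar g}=\flat_{\bar g}\circ\bar J$, and invoke Proposition~\ref{p} (twice) to replace each occurrence of $\bar J(\tau(\Phi))$ by $-[\Phi_*(\delta J)-\delta\bar J]+C$; the coefficient bookkeeping you describe is exactly what is needed.
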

\begin{proof}
It follows from Propositions \ref{p} and \ref{p999}.
\end{proof}

\textit{Adara M. Blaga}

\textit{Department of Mathematics}

\textit{West University of Timi\c{s}oara}

\textit{Bld. V. P\^{a}rvan nr. 4, 300223, Timi\c{s}oara, Rom\^{a}nia}

\textit{adarablaga@yahoo.com}

\bigskip

\textit{Antonella Nannicini}

\textit{Department of Mathematics and Informatics "U. Dini"}

\textit{University of Florence}

\textit{Viale Morgagni, 67/a, 50134, Firenze, Italy}

\textit{antonella.nannicini@unifi.it}

\end{document}